\documentclass[10pt,twoside]{siamltex}
\usepackage{amsfonts,epsfig}
\usepackage{slashbox}
\usepackage{color}
\usepackage{amsmath,multirow}
\usepackage{amsmath}
\usepackage{mathrsfs}
\usepackage{amssymb}
\usepackage{booktabs}
\usepackage{url}

\usepackage[toc,page]{appendix}
\usepackage{algorithm}
\usepackage{algorithmicx}
\usepackage{algpseudocode}

\setlength{\textheight}{230mm} \setlength{\textwidth}{165mm}
\topmargin = 0mm
\usepackage[font=scriptsize,labelsep=space]{caption}

\setlength{\parskip}{.1in}



\newtheorem{example}[theorem]{Example}
\newtheorem{remark}[subsection]{Remark}
\DeclareMathOperator{\tridiag}{tridiag}
\newcommand{\bs}{\boldsymbol}

\begin{document}

\bibliographystyle{plain}

\title{On circulant and skew-circulant splitting algorithms for (continuous) Sylvester equations
\thanks{
	The research of the last two authors was partially financed by Portuguese Funds
	through FCT (Fundação para a Ciência e a Tecnologia) within the Projects UIDB/00013/2020 and
	UIDP/00013/2020. 
This work was also supported by NSFC (National Natural Science Foundation of
China) under the Grant No. 11371075, the Hunan Key Laboratory of Mathematical Modeling and Analysis in Engineering.
}}

\author{
	Zhongyun\ Liu\thanks{School of Mathematics and Statistics,
		Changsha University of Science and Technology, Changsha 410076, P.
		R. China (liuzhongyun@263.net). }\and Fang\ Zhang\footnotemark[1]
\and Carla\ Ferreira\thanks{Centro de Matem\'atica,
	Universidade do Minho, 4710-057 Braga, Portugal (caferrei@math.uminho.pt, zhang@math.uminho.pt).}\and Yulin\ Zhang\footnotemark[3]
}
%
\pagestyle{myheadings} \markboth{Z. Y. \ Liu, F. Zhang, C. Ferreira \ and Y. L. Zhang}{Circulant and skew-circulant splitting algorithms}
\maketitle

\begin{abstract}
We present a circulant and skew-circulant splitting (CSCS) iterative method for solving large sparse continuous Sylvester equations $AX + XB = C$, where the coefficient matrices $A$ and $B$ are Toeplitz matrices. A theoretical study shows that if the circulant and skew-circulant splitting factors of $A$ and $B$ are positive semi-definite and at least one is positive definite (not necessarily Hermitian), then the CSCS method converges to the unique solution of the Sylvester equation. In addition, we obtain an upper bound for the convergence factor of the CSCS iteration. This convergence factor depends only on the eigenvalues of the circulant and  
skew-circulant splitting matrices.  A computational comparison with alternative methods reveals the efficiency and  reliability of the proposed method.
\end{abstract}

\begin{keywords}
	Continuous Sylvester equations, CSCS iteration, Toeplitz matrices, convergence
\end{keywords}

\begin{AMS}
15A24, 65F10, 65H10
\end{AMS}
\vskip 10pt

\section{\normalsize\bf Introduction}
A continuous Sylvester equation is possibly  one of the most popular linear matrix equations used in mathematics. It is a matrix equation of the form
\begin{equation}\label{original-equation}
A X + X B = C,
\end{equation}
where matrices $A \in\mathbb{C}^{n\times n}$, $B\in\mathbb{C}^{m\times m}$, $C\in\mathbb{C}^{n\times m}$ are given and the problem is to find a matrix $X\in\mathbb{C}^{n\times m}$ that obeys this equation. It is well-known that equation (\ref{original-equation}) has a unique solution for $X$ if and only if $A$ and $-B$ do not have  common eigenvalues (see, e.g.,
 \cite{Horn1991, Lancaster1985}).

The Sylvester equation is classically employed in the design of Luenberger observers, which are widely used in signal processing, control and system theory (see, e.g., \cite{Benner2009, Bhatia1997, Dooren1991,Golub1996, Kittisopaporn2021}); often appears in linear and generalized eigenvalue problems for the Riccati equation in the computation of invariant subspaces (see, e.g., \cite{Bai2006,Duang2020,Simoncini2016}); can be used to devise implicit Runge-Kutta integral formulae and block multi-step formulae for the numerical solutions of ordinary differential equations (see, e.g., \cite{Epton1980}); and  some linear systems arising, for example, from finite difference discretizations of separable elliptic boundary value problems on rectangular domains, can be written as a Sylvester equation (see, e.g., \cite{Chen2000,ElmanGolub1991}).

There are essentially two different approaches to deal with the Sylvester equation (\ref{original-equation}). The first approach consists in vectorizing the unknown matrix $X$ and translating the matrix equation into a linear system $\mathscr{A} \mathbf{x} = \mathbf{c}$, 
where vectors $\mathbf{x}$ and $\mathbf{c}$ are the column-stacking vectors of the matrices $X$ and $C$, respectively, 
and $\mathscr{A}$ is the \textsl{Kronecker sum} of the matrices $A$ and $B^{T}$, that is, $\mathscr{A}= I_{m} \otimes A + B^{T} \otimes I_{n}$, with symbol $\otimes$ denoting the standard Kronecker product. Either direct or iterative methods can be applied to solve this linear system.
The second approach is to treat the Sylvester equation (\ref{original-equation}) in its original form using an iterative method directely applied to matrices $A$, $B$ and $C$.

When matrices $A$ and $B$ are large, the order of the coefficient matrix $ \mathscr{A} \in\mathbb{C}^{mn\times mn}$ in the linear system $\mathscr{A} \mathbf{x} = \mathbf{c}$ will be considerably larger and, in general, difficulties  related to data storage and computational time arise. This explains that the first approach is mainly used in problems of small or medium dimension. The Bartels-Stewart method proposed in \cite{Bartels1972} is based on the reduction of the matrices $A$ and $B$ to real Schur form (quasi-triangular form) using the QR algorithm for eigenvalues, followed by the use of direct methods to solve several linear systems. The Hessenberg-Schur method, presented in \cite{Golub1979}, reduces matrix $A$ to Hessenberg form and only matrix $B$ is decomposed into the quasi-triangular Shur form ant it is faster then the Bartels-Stewart method. However, in both methods, the authors were unable to establish a  backward stability result. These methods are classsified as direct methods and are used by \textsc{Matlab}.

When matrices $A$ and $B$ are large and sparse, following the second approach, iterative methods such as the Smith's method \cite{Smith},
 the \textsl{alternating direction implicit} method (ADI) \cite{Benner2014, Calvetti1996, Hu1992, Levenberg1993, Wachspress1988}, 
 the \textsl{block successive over-relaxation} method (BSOR) \cite{Starke1991} and the \textsl{matrix splitting} methods \cite{Bai2007,Gu2009} are efficient and accurate methods to obtain a numerical solution of the equation (\ref{original-equation}). The development of the mentioned iterative methods based on the concept of matrix splitting has attracted several scholars and a large number of efficient and robust algorithms were proposed. See \cite{Bai2011, liu2020, liuCarla2020, Wang2013, Zheng2014} and references therein.

In this paper, we consider the case when $A$ and $B$ are both Toeplitz matrices.  Matrices with this structure appear, for example, in connection to the discretization of the convection-diffusion reaction equation \cite{Chen2000,ElmanGolub1991}. We present an iterative method for solving the Sylvester equation (\ref{original-equation}) using  the circulant and skew-circulant splittings of the matrices $A$ and $B$. This circulant and skew-circulant splitting (CSCS) iteration method is a matrix variant of the CSCS iteration method firstly proposed in \cite{Michael2003} for solving a Toeplitz linear system.  These type of methods are conceptually analogous to the ADI iteration methods. Via this CSCS iteration method, the problem of solving a general continuous Sylvester equation is translated into two coupled continuous Sylvester equations involving shifted circulant and skew-circulant matrices. 

When the circulant and skew-circulant splitting matrices of $A$ and $B$ are positive definite (not necessarily Hermitian), we prove that the CSCS iteration converges unconditionally to the exact solution of the  Sylvester equation (\ref{original-equation}).
Moreover, the values of the shift parameters that minimize an upper bound for the contraction factor are obtained in terms of the bounds for the largest and the smallest eigenvalues of the circulant and skew-circulant splitting matrices of $A$ and $B$.

The organization of this paper is as follows. After giving some basic definitions and preliminary results in section 2, we 
describe the CSCS iterative method for solving equation (\ref{original-equation}) in section 3.
We then analyze some sufficient conditions that ensure the convergence of this method in section 4. Numerical experiments are shown in section 5. These examples illustrate the efficiency and robustness of our method.

\section{Basic definitions and preliminary results}
Given a matrix $K\in\mathbb{C}^{n\times m}$, $K^{*}$ denotes the conjugate transpose of $K$, the $(i,j)$ element of $K$ is denoted by $K_{i,j}$ and $\rho(K)$ stands for the spectral radius of $K$. The set of all the eigenvalues of $K$ is represented by $\lambda(K)$. If $x\in\mathbb{C}$, $\operatorname{Re}(x)$ denotes the real part of $x$ and $\operatorname{Im}(x)$ the imaginary part. 

  Here we use the general concept of positive definiteness which says that a matrix $K\in\mathbb{C}^{n\times n}$ is positive definite if its Hermitian part $\frac{1}{2}(K+K^{*})$ is positive definite in the narrower sense. In general, this condition is equivalent to
$\operatorname{Re}(\boldsymbol{z}^*K\boldsymbol{z})>0$, for all nonzero vectors $z \in \mathbb{C}$, which implies that $\operatorname{Re}(\lambda)>0$, for any eigenvalue $\lambda$ of $K$.

A square matrix $T$ is said to be Toeplitz (or diagonal-constant)  when $ T_{j,k} = t_{j-k}$, $j,k=1,\ldots,n$, for constants $t_{1-n}, \ldots, t_{n-1}$. An important property of a Toeplitz matrix $T$ is that it always admits the additive decomposition
\begin{equation}\label{CSsplitting}
T = C_T + S_T,
\end{equation}
where $C_T$ is a  circulant matrix and $S_T$ is a skew-circulant matrix. See \cite{Michael2003,Michael2004}. We say that a matrix $C$ is a circulant matrix if $C_{j,k}=c_{j-k}$, for constants $c_{1-n}, \ldots, c_{n-1}$  such that 
$c_{-l}= c_{n-l}$, $l=1,\ldots,n-1$. That is, a circulant matrix is a Toeplitz matrix that is fully defined by its first column (or row) given that the remaining columns are cyclic permutations of the first column (or row). A skew-circulant matrix is also a particular type of   Toeplitz matrix. We say that a matrix $S$ is a skew-circulant matrix if $S_{j,k}=s_{j-k}$, for constants $s_{1-n}, \ldots, s_{n-1}$  such that $s_{-l}= -s_{n-l}$, $l=1,\ldots,n-1$.

Matrices $C_T$ and $S_T$ in \eqref{CSsplitting}, the circulant and skew-circulant splitting (CSCS) of $T$, are defined as follows:
\begin{equation}\label{splittings}
\begin{array}{lll}
{(C_T)}_{j,k}=
\frac{1}{2}\left\{
\begin{array}{cc}
t_0,                             & {\mbox{\rm if}} \  j=k,   \\[0.2cm]
t_{j-k}+t_{j-k+n} ,                 & {\mbox{\rm if}} \ j<k, \\[0.2cm]
t_{j-k}+t_{j-k-n} ,                 & {\mbox{\rm if}} \ j>k, 
\end{array}
\right. &
\mbox{\rm \quad and \quad } &
{(S_T)}_{j,k}=\frac{1}{2}
\left\{
\begin{array}{cc}
t_0,  & {\mbox{\rm if}} \ j=k,\\[0.2cm]
t_{j-k}-t_{j-k+n}  ,   & {\mbox{\rm if}} \ j<k, \\[0.2cm]
t_{j-k}-t_{j-k-n}  ,                 & {\mbox{\rm if}} \ j>k.
\end{array}
\right.
\end{array}
\end{equation}

%

It is well-known that a circulant matrix $C$ is diagonalizable by the unitary Fourier matrix $F$ of order $n$ which entries are given by 
\[
F_{j,k}=\frac{1}{\sqrt{n}}\omega^{(j-1)(k-1)}, \quad j,k=1,\ldots,n,
\]
where $\omega$ is the primitive $n$-th root of unit $\omega=\operatorname{\boldsymbol{e}}^{\frac{2\pi}{n}\textbf{i}}$, $\textbf{i}=\sqrt{-1}$. Similarly, a skew-circulant matrix $S$ is diagonalizable by the unitary matrix $\hat{F}=FD$ where
$D=\operatorname{diag}(1,\operatorname{\boldsymbol{e}}^{\frac{\pi }{n}\textbf{i}},
\ldots,\operatorname{\boldsymbol{e}}^{\frac{(n-1)\pi}{n}\textbf{i}})$. Thus,
\begin{equation}\label{FourierCS}
C=F^*\Lambda F \qquad \text{and} \qquad S=\hat{F}^* \Sigma  \hat{F},
\end{equation}
where $\Lambda$ and $\Sigma$ are diagonal matrices holding the eigenvalues of $C$ and $S$, respectively.  
Moreover, we note that $\Lambda$ and $\Sigma$ can be obtained in ${\cal{O}}(n \log n)$ operations by taking the fast Fourier transform
(FFT) of the first column (or row) of $C$ and first row of $S$, respectively.  In fact, the diagonal entries $\lambda_j$ of $\Lambda$ and the diagonal entries $\sigma_j$ of $\Sigma$ are given, respectively, by 
\begin{equation}\label{explicitEigFormlae}
\lambda_j=\sum_{k=1}^nc_{k-1}\omega^{(j-1)(k-1)} \qquad \text{and} \qquad 
\sigma_j=\sum_{k=1}^ns_{k-1}\omega^{(j-1)(k-1)}\operatorname{\boldsymbol{e}}^{\frac{\pi(k-1)}{n}\textbf{i}}, \quad j=1,\ldots, n.
\end{equation}
See, for instance, \cite{ Chan1996,Davis1979}. For the FFT algorithm, we refer to 
\cite{Loan1992,Vetterli1984}.

Once $\Lambda$ and $\Sigma$ are obtained, the products $C\boldsymbol{x}$ and $C^{-1}\boldsymbol{x}$, as well as
$S\boldsymbol{x}$ and $S^{-1}\boldsymbol{x}$, for any vector $\boldsymbol{x}$, can be computed by FFTs in ${\cal{O}}(n \log n)$ operations. Therefore, the use of circulant and skew-circulant matrices to solve matrix equations with Toeplitz matrices allows to improve the efficiency by employing FFTs throughout the computations. For a matrix $M\in {\mathbb{C}}^{n\times m}$, the FFT operation is applied to each column in ${\cal{O}}(mn \log n)$.

\section{\normalsize\bf CSCS Iteration}
Let matrices $A \in\mathbb{C}^{n\times n}$ and  $B\in\mathbb{C}^{m\times m}$ have a Toeplitz structure and let
\begin{equation}\label{CACB}
A=C_{A}+S_{A}, \quad  B=C_{B}+S_{B}
\end{equation}
be the circulant and skew-circulant splittings of $A$ and $B$ (CSCS), respectively. There are no constraints in using \eqref{splittings}, so these splittings always exist.

If $\alpha$ and $\beta$ are positive constants, the following splittings are also CSCS splittings of $A$ and $B$,
\begin{align}\label{splittinfAB}
A&=(C_{A}+\alpha I_n)+(S_{A}-\alpha I_n),\quad B=(C_{B}+\beta I_m)+(S_{B}-\beta I_m),\\
A&= (C_{A}-\alpha I_n)+(S_{A}+\alpha I_n), \quad B=(C_{B}-\beta I_m)+(S_{B}+\beta I_m).
\end{align}
It follows that if $X^*\in\mathbb{C}^{n\times m}$ is the exact solution of the Sylvester equation (\ref{original-equation}), then  
\begin{equation*}
\left\{\begin{aligned}
( C_{A}+\alpha I) X^*+X^*(C_{B}+\beta I) &= (\alpha I - S_{A})X^*+X^*(\beta I-S_{B}) + C\\
(S_{A}+\alpha I)X^*+X^*(S_{B}+\beta I) &= (\alpha I - C_{A})X^*+X^*(\beta I-C_{B}) + C
\end{aligned}
\right.
\end{equation*}
where $I$ is either the identity matrix of order $n$ or $m$, conformable to $C_A$ or $C_B$, respectively. We are now able to define the fixed-point matrix equations
\begin{equation}
\left\{\begin{aligned}
( C_{A}+\alpha I ) X+X(C_{B}+\beta I) &= (\alpha I - S_{A})Y+Y(\beta I-S_{B}) + C\\
( S_{A}+\alpha I ) Y+Y(S_{B}+\beta I) &= (\alpha I - C_{A})X+X(\beta I-C_{B}) + C
\end{aligned}
\right.
\label{fixedPointEqs}
\end{equation}
such that $X^*$ is the fixed point of both equations. The reverse also accurs, that is, if 
$X^*$ is a fixed point of either of the two equations in \eqref{fixedPointEqs}, then it is the exact solution of \eqref{original-equation}.
See \cite[Theorem 3.1]{BaiGolub2007}.

The CSCS iteration method is defined as follows. 

\medskip

\par\noindent\textbf{CSCS iteration method. }
{\it{Given an initial approximation $X^{(0)}$ and positive constants $\alpha$, $\beta$\break(shift parameters), repeat the iterative scheme
		\begin{equation}\label{CSCSIteration}
		\begin{cases}
		(\alpha I + C_{A})X^{(k+\frac{1}{2})}+X^{(k+\frac{1}{2})}(\beta I+C_{B}) = (\alpha I - S_{A})X^{(k)}+X^{(k)}(\beta I-S_{B}) + C
		\qquad \left(\text{solve for }{X}^{(k+\frac{1}{2})}\right)\\
		(\alpha I + S_{A})X^{(k+1)}+X^{(k+1)}(\beta I+S_{B}) = (\alpha I - C_{A})X^{(k+\frac{1}{2})}+X^{(k+\frac{1}{2})}(\beta I-C_{B}) + C\;\,   \left(\text{solve for }{X}^{(k+1)}\right)
		\end{cases}
		\end{equation}
		for $k=0,1,2,\cdots, $ until $\left\{X^{(k)}\right\}$ converges.} }

An alternative version of this two-step iteration is obtained if we compute the corrections $Z^{(k+\frac{1}{2})}=X^{(k+\frac{1}{2})}-X^{(k)}$ and 
$Z^{(k+1)}=X^{(k+1)}-X^{(k+\frac{1}{2})}$ in each iteration which brings the residuals $R^{(k)}$ and $R^{(k+\frac12)}$
into the computation. The iterative scheme is changed to
\begin{equation}\label{CSCSIteration2}
\begin{cases}
R^{(k)}=C-AX^{(k)}-X^{(k)}B\\
(\alpha I + C_{A})Z^{(k+\frac{1}{2})}+Z^{(k+\frac{1}{2})}(\beta I+C_{B}) = R^{(k)}
\qquad \qquad \left(\text{solve for }{Z}^{(k+\frac{1}{2})}\right)\\
X^{(k+\frac{1}{2})}=X^{(k)}+Z^{(k+\frac{1}{2})}\\
R^{(k+\frac12)}=C-AX^{(k+\frac12)}-X^{(k+\frac12)}B\\
(\alpha I + S_{A})Z^{(k+1)}+Z^{(k+1)}(\beta I+S_{B}) = R^{(k+\frac12)} \;\,  \quad \qquad \left(\text{solve for }{Z}^{(k+1)}\right)\\
X^{(k+1)}=X^{(k+\frac12)}+Z^{(k+1)}\\
\end{cases}
\end{equation}
Each CSCS iteration requires the solution of two Sylvester equations. In the first step ${Z}^{(k+\frac{1}{2})}$ is the solution of the equation
	\begin{equation}\label{matrixEqu1}
		(\alpha I + C_{A})Z+Z(\beta I+C_{B})=R^{(k)}
	\end{equation}
and in the second step $Z^{(k+1)}$ is the solution of the equation
	\begin{equation}\label{matrixEqu2}
(\alpha I + S_{A})Z +Z (\beta I+S_{B})=R^{(k+\frac{1}{2})}.
	\end{equation}
The method alternates between equation $\eqref{matrixEqu1}$, with circulant matrices $C_A$ and $C_B$, and equation  $\eqref{matrixEqu2}$,  with skew-circulant matrices $S_A$ and $S_B$, and we can reverse the roles of these matrix equations.

Observe that once we compute the eigenvalues of $C_{A}$ and $C_B$, it is always possible to choose positive constants $\alpha$ and $\beta$ such that $\alpha I + C_{A}$ and $-(\beta I+C_{B})$ do not have eigenvalues in common and thus the matrix equation $\eqref{matrixEqu1}$ has a unique solution. A similar observation applies to the matrix equation $\eqref{matrixEqu2}$ concerning the matrices $\alpha I + S_{A}$ and $-(\beta I+S_{B})$.

Obtained the eigenvalue decompositions, see \eqref{FourierCS},
\begin{align}\label{FourierFac}
C_A=F_A^*\Lambda_A F_A, \qquad C_B=F_B^*\Lambda_B F_B, \qquad S_A=\hat{F}_A^* \Sigma_A  \hat{F}_A, \qquad S_B=\hat{F}_B^* \Sigma_B  \hat{F}_B,
\end{align}
equation $\eqref{matrixEqu1}$ is equivalent to  
\begin{equation}\label{matrixEqu1a}
(\alpha I+\Lambda_A){\cal{Z}}+{\cal{Z}}(\beta I+\Lambda_B)={\cal{R}}^{(k)}
\end{equation}
where ${\cal{Z}}=F_AZF_B^*$ and ${{\cal{R}}^{(k)}}=F_AR^{(k)}F_B^*$;
and equation $\eqref{matrixEqu2}$ is equivalent to
\begin{equation}\label{matrixEqu2a}
 (\alpha I+\Sigma_A){\cal{Z}}+{\cal{Z}}(\beta I+\Sigma_B)={{\cal{R}}^{(k+\frac{1}{2})}}
\end{equation}
where ${\cal{Z}}=\hat F_AZ\hat F_B^*$ and ${{\cal{R}}^{(k+\frac{1}{2})}}=\hat F_AR^{(k+\frac{1}{2})}\hat F_B^*$.

Solving the Sylvester equations \eqref{matrixEqu1a} and \eqref{matrixEqu2a} is immediate since these matrix equations can be translated into linear systems with diagonal coefficent matrices, $I_{m} \otimes (\alpha I_n+\Lambda_A) +(\beta I_m+\Lambda_B)^{T} \otimes I_{n}$ and 
$I_{m} \otimes (\alpha I_n+\Sigma_A) + (\beta I_m+\Sigma_B)^{T} \otimes I_{n}$, respectively. 

The solutions of the initial Sylvester equations \eqref{matrixEqu1} and \eqref{matrixEqu2} are given by $Z={F_A^*}{\cal{Z}}F_B$, for ${\cal{Z}}$ satisfying \eqref{matrixEqu1a},  and $Z=\hat F_A^*{\cal{Z}}\hat F_B$, for ${\cal{Z}}$ satisfying \eqref{matrixEqu2a}, respectively. These products can be computed efficiently using FFTs.

It is also possible, once again using FFTs, to reduce the computational effort associated to the right-hand side of the  two Sylvester equations involved in each CSCS iteration, equations \eqref{matrixEqu1a} and \eqref{matrixEqu2a}. For an approximation $X^{(j)}$, 
the residual $R^{(j)}$ can be computed using the decomposition 
\begin{align}
{R^{(j)}}&=C-\left(C_A+S_A\right)X^{(j)}-X^{(j)}(C_B+S_B)\nonumber\\
&=C-F_A^*\Lambda_{A}F_AX^{(j)}-\hat{F}_A^*\Sigma_A \hat{F}_AX^{(j)}
-X^{(j)}F_B^*\Lambda_{B}F_B-X^{(j)}\hat{F}_B^*\Sigma_B\hat{F}_B.\label{residualGeneral}
\end{align}
Thus, the right-hand sides of 
equations \eqref{matrixEqu1a} and \eqref{matrixEqu2a}  are given, respectively,  by
\begin{align}
{{\cal{R}}^{(k)}}&=F_A\left[C-F_A^*\Lambda_{A}F_AX^{(k)}-\hat{F}_A^*\Sigma_A \hat{F}_AX^{(k)}
-X^{(k)}F_B^*\Lambda_{B}F_B-X^{(k)}\hat{F}_B^*\Sigma_B\hat{F}_B\right]F_B^*\label{residual1}
\end{align}
and   
\begin{align}
{{\cal{R}}^{(k+\frac{1}{2})}}
&=\hat F_A\left[C-F_A^*\Lambda_{A}F_AX^{(k+\frac{1}{2})}-\hat{F}_A^*\Sigma_A \hat{F}_AX^{(k+\frac{1}{2})}
-X^{(k+\frac{1}{2})}F_B^*\Lambda_{B}F_B-X^{(k+\frac{1}{2})}\hat{F}_B^*\Sigma_B\hat{F}_B\right]\hat F_B^*.\label{residual2}
\end{align}

Given an initial approximation $X^{(0)}$, positive constants $\alpha$, $\beta$, and the spectral factorizations \eqref{FourierFac}, the two steps of the CSCS iteration \eqref{CSCSIteration2} can then be expressed as
\begin{equation}\label{NewScheme}
\begin{cases}
\text{Use }\eqref{residual1} \text{ to compute } {{\cal{R}}^{(k)}} \\
(\alpha I+\Lambda_A){\cal{Z}}+{\cal{Z}}(\beta I+\Lambda_B)={\cal{R}}^{(k)}
\hspace{4.5cm}\left(\text{solve for }{\cal Z}\right)\\
X^{(k+\frac{1}{2})}=X^{(k)}+{F_A^*}{\cal Z}F_B\\
\text{Use }\eqref{residual2} \text{ to compute }  {{\cal{R}}^{(k+\frac12)}} \\
 (\alpha I+\Sigma_A){\cal{Z}}+{\cal{Z}}(\beta I+\Sigma_B)={{\cal{R}}^{(k+\frac{1}{2})}} \;\,  
 \hspace{4cm} \left(\text{solve for }{\cal Z}\right)\\
X^{(k+1)}=X^{(k+\frac12)}+\hat F_A^*{\cal Z}\hat F_B\\	
\end{cases}
\end{equation}
for $k=0,1,2,\cdots, $ until $\left\{X^{(k)}\right\}$ converges.

Notice that all the matrix multiplications can be performed using FFTs and thus the operation count is ${\cal{O}}(mn \log n)$ (or ${\cal{O}}(nm \log m)$, depending on which is bigger). There is no need to perform explicit matrix multiplications.
See Algorithm \ref{alg:algorithm1} in Appendix \ref{ApendiceA} for a \textsc{Matlab} implemention of CSCS.

\section{\normalsize\bf Convergence results} Using a matrix-vector formulation of the Sylvester equation \eqref{original-equation}, such that vectors $\mathbf{x}$ and $\mathbf{c}$ are the column-stacking vectors of the matrices $X$ and $C$, respectively, the two-step iterative CSCS scheme \eqref{CSCSIteration} can be rewritten as
\begin{equation}\label{CSCSform2}
\begin{cases}
\left[I_{m} \otimes (\alpha I + C_{A}) + (\beta I+C_{B})^{T} \otimes I_{n}\right]\mathbf{x}^{(k+\frac{1}{2})}=
\left[I_{m} \otimes (\alpha I - S_{A}) + (\beta I-S_{B})^{T} \otimes I_{n}\right]\mathbf{x}^{(k)}+\mathbf{c}\\ 
\left[I_{m} \otimes (\alpha I + S_{A}) + (\beta I+S_{B})^{T} \otimes I_{n}\right]\mathbf{x}^{(k+1}=
\left[I_{m} \otimes (\alpha I - C_{A}) + (\beta I-C_{B})^{T} \otimes I_{n}\right]\mathbf{x}^{(k+\frac{1}{2})}+\mathbf{c}\\ 
\end{cases}
\end{equation}
for $k=0,1,2,\cdots, $ until $\left\{\mathbf{x}^{(k)}\right\}$ converges, given an initial approximation $\mathbf{x}^{(0)}$ and positive constants $\alpha$, $\beta$. 

In this section we will establish  theoretical results concerning the convergence conditions of the CSCS iteration and our analysis is based on this matrix-vector formulation of the method. 

So, we are considering the linear system  $\mathscr{A} \mathbf{x} = \mathbf{c}$, equivalent to the Sylvester equation \eqref{original-equation}, where  $\mathscr{A}= I_{m} \otimes A + B^{T} \otimes I_n$, and the two splittings of the matrix $\mathscr{A}$,
\begin{align*}
\mathscr{A}&=\left[I_{m} \otimes (\alpha I + C_{A}) + (\beta I+C_{B})^{T} \otimes I_{n}\right]-
\left[I_{m} \otimes (\alpha I - S_{A}) + (\beta I-S_{B})^{T} \otimes I_{n}\right],\\
\mathscr{A}&=\left[I_{m} \otimes (\alpha I + S_{A}) + (\beta I+S_{B})^{T} \otimes I_{n}\right]-
\left[I_{m} \otimes (\alpha I - C_{A}) + (\beta I-C_{B})^{T} \otimes I_{n}\right],
\end{align*}
corresponding to the CSCS splittings of $A$ and $B$ given in \eqref{splittinfAB}. Using the bilinearity property of the kronecker product, these decompositions of $\mathscr{A}$ can be rewritten as
\begin{align*}
\mathscr{A}&=\left[(\alpha +\beta)I_{mn} +\left(I_m\otimes  C_{A}+ C_{B}^{T} \otimes I_{n}\right)\right]-
\left[(\alpha +\beta)I_{mn}- \left(I_m \otimes S_{A}+S_{B}^{T} \otimes I_{n}\right)\right],\\
\mathscr{A}&=\left[(\alpha +\beta)I_{mn} +\left(I_m\otimes  S_{A}+ S_{B}^{T} \otimes I_{n}\right)\right]-
\left[(\alpha +\beta)I_{mn}- \left(I_m \otimes C_{A} +C_{B}^{T} \otimes I_{n}\right)\right].
\end{align*}
Thereby, defining  $\widetilde{C}= \left(I_m\otimes  C_{A}+ C_{B}^{T} \otimes I_{n}\right)$ and $\widetilde{S}=\left(I_m \otimes S_{A}+S_{B}^{T} \otimes I_{n}\right)$, we have
\begin{align} 
\begin{aligned}
\mathscr{A}&=\big[(\alpha +\beta)I +\widetilde{C}\big]-
\big[(\alpha +\beta)I- \widetilde{S}\big],\\
\mathscr{A}&=\big[(\alpha +\beta)I +\widetilde{S}\big]-
\big[(\alpha +\beta)I- \widetilde{C}\big],
\end{aligned}
\label{newformdecomp}
\end{align}
where $I$ represents the identity matrix of order $mn$, and the iteration \eqref{CSCSform2} can be expressed as 
\begin{equation}\label{NewIterScheme}
\begin{cases}
\big[(\alpha +\beta)I +\widetilde{C}\big]\mathbf{x}^{(k+\frac{1}{2})}=
\big[(\alpha +\beta)I- \widetilde{S}\big]\mathbf{x}^{(k)}+\mathbf{c}\\ 
\big[(\alpha +\beta)I +\widetilde{S}\big]\mathbf{x}^{(k+1}=
\big[(\alpha +\beta)I- \widetilde{C}\big]\mathbf{x}^{(k+\frac{1}{2})}+\mathbf{c}.\\ 
\end{cases}
\end{equation}
This iterative scheme is a particular case of a more general two-step splitting scheme defined by
\begin{equation} 
\begin{cases}
M_{1}\mathbf{x}^{(k+\frac{1}{2})}=N_{1}\mathbf{x}^{(k)}+\mathbf{c}\\
M_{2}\mathbf{x}^{(k+1)}=N_{2}\mathbf{x}^{(k+\frac{1}{2})}+\mathbf{c},  \qquad  k=0,1,2,\ldots, \\
\end{cases}
\label{generalscheme}
\end{equation}
assuming that $\mathscr{A}$ admits the decompositions $\mathscr{A}= M_{i}-N_{i}$, $i = 1,2$, with $M_1$ and $M_2$ invertible matrices.\break The sequence $\left\{\mathbf{x}^{(k)}\right\}$ generated by \eqref{generalscheme} satisfies 
\begin{equation}\label{twoformulae0}
\mathbf{x}^{(k+1)} = \mathscr{M}\mathbf{x}^{(k)} + \mathscr{C} \mathbf{c}, \qquad k=0,1,2,\ldots,
\end{equation}
where  
\begin{equation}\label{twoformulae}
\mathscr{M}= M_{2}^{-1} N_{2} M_{1}^{-1} N_{1}  \text{\quad and \quad } \mathscr{C}=M_{2}^{-1}\left(I + N_{2} M_{1}^{-1}\right).
\end{equation}
Matrix $\mathscr{M}$ is called the {\textit{iteration matrix}} 
and it is well-kown  that $\left\{\mathbf{x}^{(k)}\right\}$ converges to the exact solution of the linear system $\mathscr{A} \mathbf{x} = \mathbf{c}$ if and only if $\rho(\mathscr{M})<1$, for any initial approximation $\mathbf{x}^{(0)}\in \mathbb{C}^{mn}$ \cite{Saad00, DavidYoung} .   

We will prove that when $\widetilde{C}=I_m\otimes C_{A}+C_{B}^{T}\otimes I_n \ \mbox{and} \ \widetilde{S}=I_m\otimes S_{A}+S_{B}^{T}\otimes I_n$ are positive definite matrices (the real part of the eigenvalues is positive), then the proposed CSCS iterative method converges.

\begin{theorem}\label{CSCS convergence 1}
Let $A\in\mathbb C^{n\times  n}$ and $B\in\mathbb C^{m\times m}$ be Toeplitz matrices such that $A = C_{A} + S_{A}$ and $B = C_{B} + S_{B}$ are the circulant and skew-circulant splittings of $A$ and $B$, respectively.  Consider the linear system 
$\mathscr{A} \mathbf{x} = \mathbf{c}$, where  $\mathscr{A}= I_{m} \otimes A + B^{T} \otimes I_n$, equivalent to the Silvester equation \eqref{original-equation}, and the splitting 
$
\mathscr{A}=\widetilde{C}+\widetilde{S},
$
where
 \begin{equation}\label{CtilStil}
 \widetilde{C}=I_m\otimes C_{A}+C_{B}^{T}\otimes I_n \ \mbox{\quad and \quad } \ \widetilde{S}=I_m\otimes S_{A}+S_{B}^{T}\otimes I_n.
 \end{equation}
Let $\alpha$, $\beta$ be two positive constants and  $\gamma =\alpha+\beta$. Then the iteration matrix of the CSCS scheme \eqref{NewIterScheme} is  
\begin{equation}\label{iterMatrixM}
\mathscr{M}_\gamma=\big(\gamma I+\widetilde{S}\big)^{-1}\big(\gamma I-\widetilde{C}\big)\big(\gamma I+\widetilde{C}\big)^{-1}\big(\gamma I-\widetilde{S}\big)
\end{equation}
and its spectral radius $\rho(\mathscr{M}_\gamma)$ is bounded by
$$
\sigma_\gamma\equiv \max\limits_{\lambda_{j} \in \lambda(\tilde{C})} {\left|  \frac{\gamma - \lambda_{j}}{\gamma + \lambda_{j}} \right| }\cdot\max\limits_{\mu_{j} \in \lambda(\tilde{S})} {\left|  \frac{\gamma- \mu_{j}}{\gamma + \mu_{j}} \right|}.$$
 If $\widetilde{C}$ is positive definite and $\widetilde{S}$ is positive semi-definite (or vice-versa), then 
\[
\rho(\mathscr{M}_\gamma)\leq \sigma_\gamma<1, \quad \text{for all }\gamma >0,
\]
and, thus, the CSCS iteration \eqref{NewIterScheme} converges to the exact solution $\mathbf{x}^{\star}$ of the linear system $\mathscr{A} \mathbf{x} = \mathbf{c}$.\break The equivalent CSCS iteration \eqref{CSCSIteration} converges to the exact solution $X^{\star}\in\mathbb C^{m \times n}$
of the  Sylvester equation (\ref{original-equation}). 
\end{theorem}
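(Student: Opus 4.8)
The plan is to identify the iteration matrix through the general two-step framework \eqref{generalscheme}--\eqref{twoformulae}, and then to bound its spectral radius by exploiting the \emph{normality} of $\widetilde{C}$ and $\widetilde{S}$. First I would set $M_1=\gamma I+\widetilde{C}$, $N_1=\gamma I-\widetilde{S}$, $M_2=\gamma I+\widetilde{S}$, $N_2=\gamma I-\widetilde{C}$, so that the scheme \eqref{NewIterScheme} is exactly an instance of \eqref{generalscheme}. Under the hypotheses the eigenvalues of $\widetilde{C}$ and $\widetilde{S}$ have nonnegative real part, hence $\gamma+\lambda$ has real part at least $\gamma>0$ for every such eigenvalue $\lambda$; this makes $M_1$ and $M_2$ invertible, and \eqref{twoformulae} yields the stated form \eqref{iterMatrixM} of $\mathscr{M}_\gamma$ at once.

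The crux of the argument is to establish that $\widetilde{C}$ and $\widetilde{S}$ are normal. A circulant matrix is unitarily diagonalized by the Fourier matrix, see \eqref{FourierCS}, and is therefore normal; its transpose is again circulant and hence also normal, and likewise every skew-circulant matrix is normal. Consequently $I_m\otimes C_A$ and $C_B^{T}\otimes I_n$ are normal, and they commute because both products equal $C_B^{T}\otimes C_A$. Two commuting normal matrices are simultaneously unitarily diagonalizable, so their sum $\widetilde{C}$ is normal, with eigenvalues $\lambda_i(C_A)+\lambda_j(C_B)$; the same reasoning applies to $\widetilde{S}$.

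The spectral-radius bound then follows from a similarity argument. Conjugating \eqref{iterMatrixM} by $\gamma I+\widetilde{S}$ shows that $\mathscr{M}_\gamma$ is similar to
\[
\widehat{\mathscr{M}}_\gamma=(\gamma I-\widetilde{C})(\gamma I+\widetilde{C})^{-1}(\gamma I-\widetilde{S})(\gamma I+\widetilde{S})^{-1},
\]
so that $\rho(\mathscr{M}_\gamma)=\rho(\widehat{\mathscr{M}}_\gamma)\le\|\widehat{\mathscr{M}}_\gamma\|_2$. Submultiplicativity of the spectral norm splits this into the two factors $\|(\gamma I-\widetilde{C})(\gamma I+\widetilde{C})^{-1}\|_2$ and $\|(\gamma I-\widetilde{S})(\gamma I+\widetilde{S})^{-1}\|_2$. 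Because $\widetilde{C}$ is normal, the rational matrix function $(\gamma I-\widetilde{C})(\gamma I+\widetilde{C})^{-1}$ is normal too, so its spectral norm equals its spectral radius $\max_{\lambda_j\in\lambda(\widetilde{C})}|(\gamma-\lambda_j)/(\gamma+\lambda_j)|$, and analogously for $\widetilde{S}$; multiplying the two gives $\rho(\mathscr{M}_\gamma)\le\sigma_\gamma$. It is precisely here that normality is indispensable: without it the spectral norm could strictly exceed the eigenvalue-based maximum, and the clean bound $\sigma_\gamma$ would fail. This is the step I expect to be the main obstacle.

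Finally I would carry out the elementary estimate. Writing $a=\operatorname{Re}(\lambda)$ and $b=\operatorname{Im}(\lambda)$, one has
\[
\left|\frac{\gamma-\lambda}{\gamma+\lambda}\right|^2=\frac{(\gamma-a)^2+b^2}{(\gamma+a)^2+b^2},
\]
which, since $(\gamma+a)^2-(\gamma-a)^2=4\gamma a$, is strictly less than $1$ when $a>0$ and at most $1$ when $a\ge 0$. If $\widetilde{C}$ is positive definite and $\widetilde{S}$ positive semidefinite, the first maximum is $<1$ and the second is $\le 1$, so $\sigma_\gamma<1$ for every $\gamma>0$. The standard criterion $\rho(\mathscr{M}_\gamma)<1$ then guarantees convergence of $\{\mathbf{x}^{(k)}\}$ for any $\mathbf{x}^{(0)}$; since the Hermitian part of $\mathscr{A}=\widetilde{C}+\widetilde{S}$ is positive definite, $\mathscr{A}$ is nonsingular and the limit is its unique solution $\mathbf{x}^{\star}$. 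Unstacking $\mathbf{x}^{\star}$ recovers the convergence of \eqref{CSCSIteration} to the unique solution $X^{\star}$ of \eqref{original-equation}.
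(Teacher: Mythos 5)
Your proposal is correct and follows essentially the same route as the paper's proof: the same identification of the iteration matrix $\mathscr{M}_\gamma$ through the two-step splitting framework \eqref{generalscheme}--\eqref{twoformulae}, the same similarity-plus-submultiplicativity bound on $\rho(\mathscr{M}_\gamma)$, and the same elementary estimate $\left|\frac{\gamma-\lambda}{\gamma+\lambda}\right|<1$ for eigenvalues with positive real part. The only (minor) difference is how you justify that each factor's 2-norm equals its eigenvalue maximum: you argue abstractly that $\widetilde{C}$ and $\widetilde{S}$ are normal (circulants and skew-circulants are normal, and commuting normal matrices have normal sums), whereas the paper invokes the explicit unitary diagonalizations of $\widetilde{C}$ and $\widetilde{S}$ by the Kronecker products of Fourier-type matrices in \eqref{diagonalizations} -- the two justifications are interchangeable here.
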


\begin{proof} Given $\gamma =\alpha+\beta$, the CSCS iteration \eqref{NewIterScheme} can be rewritten as 
\begin{equation}\label{phi23}
\begin{cases}
\big(\gamma I+\widetilde{C}\big)\mathbf{x}^{(k+\frac{1}{2})}=\big(\gamma I-\widetilde{S}\big)\mathbf{x}^{(k)}+\mathbf{c} \\
\big(\gamma I+\widetilde{S}\big)\mathbf{x}^{(k+1)}=\big(\gamma I-\widetilde{C}\big)\mathbf{x}^{(k+\frac{1}{2})}+\mathbf{c}
\end{cases}
\end{equation}
 which is the two-step splitting iterative scheme \eqref{generalscheme} to solve the linear system $\mathscr{A} \mathbf{x} = \mathbf{c}$ with $M_1=\gamma I+\widetilde{C}$, $N_1=\gamma I-\widetilde{S}$, $M_2=\gamma I+\widetilde{S}$ and $N_2=\gamma I-\widetilde{C}$. According to 
 \eqref{twoformulae0} and \eqref{twoformulae}, the two steps of iteration \eqref{phi23} can be put together into the stationary fixed-point iteration 
 \begin{equation}\label{fixedPoint2}
 \mathbf{x}^{(k+1)} = \mathscr{M}_{\gamma}\mathbf{x}^{(k)} + \mathscr{C}_{\gamma} \mathbf{c}, \qquad k=0,1,2,\ldots,
 \end{equation}
 where 
 \[
 \mathscr{M}_{\gamma}=\big(\gamma I+\widetilde{S})^{-1}\big(\gamma I-\widetilde{C}\big)\big(\gamma I+\widetilde{C}\big)^{-1}\big(\gamma I-\widetilde{S}\big) \ \text{\quad and \quad} \
  \mathscr{C}_{\gamma} =2\gamma\big(\gamma I+\tilde{S}\big)^{-1}\big(\gamma I+\tilde{C}\big)^{-1}.
 \]
The spectral radius $\rho( \mathscr{M}_{\gamma})$ governs the convergence of \eqref{fixedPoint2} and, since the spectrum of a matrix is invariant under a similarity transformation, we find that 
\begin{equation}
\begin{aligned}
\rho(\mathscr{M}_{\gamma})&=\rho\left(\big(\gamma I-\widetilde{C}\big)\big(\gamma I+\widetilde{C}\big)^{-1}\big(\gamma I-\widetilde{S}\big)\big(\gamma I+\widetilde{S}\big)^{-1}\right)\\
   &\leq {\left\| \big(\gamma I-\widetilde{C}\big)\big(\gamma I+\widetilde{C}\big)^{-1}\big(\gamma I-\widetilde{S}\big)\big(\gamma I+\widetilde{S}\big)^{-1} \right\|}_{2}\\
               &\leq {\left\|\big(\gamma I-\widetilde{C}\big)\big(\gamma I+\widetilde{C}\big)^{-1}\right\|}_{2}\cdot{\left\|\big(\gamma I-\widetilde{S}\big)\big(\gamma I+\widetilde{S}\big)^{-1}\right\|}_{2}.
\end{aligned}
\label{ineqNorm}
\end{equation}
Since $C_A$, $C_B$, $F_A$ and $F_B$ are diagonalizable by the Fourier-type matrices $F_A$, $F_B$, $\hat{F}_A$ and $\hat{F}_B$, respectively, (see \eqref{FourierCS}), then $\widetilde{C}$ is diagonalizable by $F_B\otimes F_A$  and $\widetilde{S}$ by $\hat{F}_B\otimes\hat{F}_A$ \cite{Horn1991},
\begin{align}
\label{diagonalizations}
\widetilde{C}=(F_B\otimes F_A)^*\Lambda_{\widetilde{C}}(F_B\otimes F_A), \quad 
\widetilde{S}=(\hat F_B\otimes \hat F_A)^*\Sigma_{\widetilde{S}}(\hat F_B\otimes \hat F_A),
\end{align}
where $\Lambda_{\widetilde{C}}=I\otimes \Lambda_{A}+\Lambda_{B}\otimes I$ and $\Sigma_{\widetilde{S}}=I\otimes \Sigma_{A}+\Sigma_{B}\otimes I$. 

Thus, by \eqref{diagonalizations}  and the invariance of the matrix 2-norm under a unitary similarity,
\begin{align*}
{\left\|\big(\gamma I-\widetilde{C}\big)\big(\gamma I+\widetilde{C}\big)^{-1}\right\|}_{2}&=
{\left\|\big(\gamma I-\Lambda_{\widetilde{C}}\big)\big(\gamma I+\Lambda_{\widetilde{C}}\big)^{-1}\right\|}_{2}=
\max\limits_{ \lambda_{k} \in \lambda(\widetilde{C})} {\left|  \frac{\gamma - \lambda_k}{\gamma + \lambda_k} \right|},\\
{\left\|\big(\gamma I-\widetilde{S}\big)\big(\gamma I+\widetilde{S}\big)^{-1}\right\|}_{2}&=
{\left\|\big(\gamma I-\Sigma_{\widetilde{S}}\big)\big(\gamma I+\Sigma_{\widetilde{S}}\big)^{-1}\right\|}_{2}
=\max\limits_{ \mu_{k} \in \lambda(\widetilde{S})} {\left|  \frac{\gamma - \mu_k}{\gamma + \mu_k} \right|. }
\end{align*}
For $\lambda_k=a_k+\mathbf{i}b_k\in \lambda(\widetilde{C})$ and $\mu_k=c_k+\mathbf{i}d_k\in \lambda(\widetilde{S})$,  $k=1,\ldots,mn$, we have $a_k> 0$, $c_k\ge0$ (or $a_k\ge 0$, $c_k>0$), by the assumption that 
$\widetilde{C}$ is positive definite and $\widetilde{S}$ is positive semi-definite (or vice-versa),
and  then
\[
{\left|  \frac{\gamma - \lambda_k}{\gamma + \lambda_k} \right| }=\sqrt{\frac{(\gamma - a_k)^2+b_k^2}{(\gamma + a_k)^2+b_k^2}}<1 \; (\le1),\qquad 
{\left|  \frac{\gamma - \mu_k}{\gamma + \mu_k} \right| }=\sqrt{\frac{(\gamma - c_k)^2+d_k^2}{(\gamma + c_k)^2+d_k^2}}\le1 \; (<1),
\]
since $\gamma >0$. As a consequence,
\begin{equation}\label{sigmaDef}
\sigma_\gamma:=\max\limits_{ \lambda_{k} \in \lambda(\widetilde{C})} {\left|  \frac{\gamma - \lambda_k}{\gamma + \lambda_k} \right|}\cdot \max\limits_{ \mu_{k} \in \lambda(\widetilde{S})} {\left|  \frac{\gamma - \mu_k}{\gamma + \mu_k} \right|}<1.
\end{equation}
Finally, \eqref{ineqNorm} yields
\[
\rho(\mathscr{M}_{\gamma})\leq \sigma_\gamma<1
\]
which ensures that the CSCS iteration \eqref{NewIterScheme} converges to the exact solution $\mathbf{x}^{\star}$ of the linear system $\mathscr{A} \mathbf{x} = \mathbf{c}$ and that the CSCS iteration \eqref{CSCSIteration}, which is equivalent to \eqref{NewIterScheme},  converges to the exact solution $X^{\star}\in\mathbb C^{m \times n}$
of the  Sylvester equation (\ref{original-equation}). 
\end{proof}

\begin{corollary}\label{corollary1}
	 If one of the matrices $C_A$, $C_B$, $S_A$ and $S_B$ is positive definite and all the others are positive semi-definite, then 
	the CSCS iteration \eqref{CSCSIteration} converges to the exact solution $X^{\star}\in\mathbb C^{m \times n}$
	of the  Sylvester equation (\ref{original-equation}). 
\end{corollary}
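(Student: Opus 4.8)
The plan is to reduce the hypotheses of the corollary to those of Theorem~\ref{CSCS convergence 1} by analyzing how the spectra of the individual splitting matrices combine into the spectra of $\widetilde{C}$ and $\widetilde{S}$. The starting point is the observation that $C_A$, $C_B$, $S_A$ and $S_B$ are all normal matrices, since they are diagonalized by the unitary Fourier-type matrices in \eqref{FourierCS}. For a normal matrix $K=U^*DU$ with $U$ unitary, the Hermitian part equals $U^*\operatorname{Re}(D)U$, so $K$ is positive definite (positive semi-definite) in the general sense used here precisely when every eigenvalue has strictly positive (respectively nonnegative) real part.

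Next I would transfer this to the Kronecker sums. By the diagonalizations \eqref{diagonalizations}, the spectrum of $\widetilde{C}$ consists of the pairwise sums $(\lambda_A)_i+(\lambda_B)_j$ of the eigenvalues of $C_A$ and $C_B$, and likewise the spectrum of $\widetilde{S}$ consists of the sums $(\sigma_A)_i+(\sigma_B)_j$ of the eigenvalues of $S_A$ and $S_B$. Since $\operatorname{Re}$ is additive, $\operatorname{Re}\big((\lambda_A)_i+(\lambda_B)_j\big)=\operatorname{Re}\big((\lambda_A)_i\big)+\operatorname{Re}\big((\lambda_B)_j\big)$, and the same identity holds for $\widetilde{S}$. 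Consequently, if one of $C_A$, $C_B$ has all eigenvalues with positive real part while the other has nonnegative real part, then each eigenvalue sum has strictly positive real part, so $\widetilde{C}$ is positive definite; and if both are merely positive semi-definite, then $\widetilde{C}$ is positive semi-definite. The analogous statements hold for $\widetilde{S}$ in terms of $S_A$, $S_B$.

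With these facts in hand, the proof is a short case distinction. If the single positive-definite matrix is $C_A$ or $C_B$, the remaining three being positive semi-definite, then $\widetilde{C}$ is positive definite while $\widetilde{S}$ is positive semi-definite; if instead it is $S_A$ or $S_B$, then $\widetilde{S}$ is positive definite while $\widetilde{C}$ is positive semi-definite. In either configuration the pair $(\widetilde{C},\widetilde{S})$ satisfies the hypothesis of Theorem~\ref{CSCS convergence 1} (possibly after exchanging the roles of the two matrices, which the theorem explicitly permits). Hence $\rho(\mathscr{M}_\gamma)\le\sigma_\gamma<1$ for every $\gamma>0$, and the equivalent CSCS iteration \eqref{CSCSIteration} converges to $X^\star$.

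I do not expect a genuine obstacle here: the only point requiring care is the equivalence between the general positive-definiteness notion and the sign of the real parts of the eigenvalues, which is exactly where the normality of the circulant and skew-circulant matrices is essential. Without normality this spectral condition would be necessary but not sufficient, and the reduction to Theorem~\ref{CSCS convergence 1} would break down. Everything else is the additivity of real parts applied to the Kronecker-sum spectra already computed in the proof of Theorem~\ref{CSCS convergence 1}.
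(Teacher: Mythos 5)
Your proof is correct and follows essentially the same route as the paper's: both use the fact that the spectrum of the Kronecker sum is the set of pairwise eigenvalue sums, the additivity of real parts, and a short case distinction that places $(\widetilde{C},\widetilde{S})$ under the hypotheses of Theorem~\ref{CSCS convergence 1}. The only difference is that you explicitly invoke normality of the circulant and skew-circulant factors to identify positive (semi-)definiteness with the sign of the eigenvalues' real parts---a point the paper's proof passes over silently---so your version is a slightly more careful rendering of the same argument rather than a different one.
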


\begin{proof}
Recall that given two matrices $G\in \mathbb{C}^{n \times n}$ and $H\in \mathbb{C}^{m \times m}$  with eigenvalues $\lambda_i$,  $i=1,\ldots,n$, and $\mu_j,  j=1,\ldots, m$, respectively, the eigenvalues of the Kronecker sum $G\oplus H=I_m\otimes G+H\otimes I_n$ are the 
pairwise sums $\lambda_i+\mu_j$, $i=1,\ldots, n, j=1,\ldots,m$ (see, e.g., \cite{Horn1991}). Using this property, it is immediate to conclude, for example, that if $G$ is positive definite and $H$ is positive semi-definite, then $G\oplus H$ is positive definite. In fact, 
$\operatorname{Re}(\lambda_j)>0$ and $\operatorname{Re}(\mu_j)\ge0$ imply that $\operatorname{Re}(\lambda_i+\mu_j)=\operatorname{Re}(\lambda_i)+\operatorname{Re}(\mu_j)>0$.

Suppose that  $C_A$ is positive definite and $C_B$, $S_A$ and $S_B$ are all positive semi-definite. Then $\widetilde{C}= C_{A}\oplus C_{B}^{T}$ and $\widetilde{S}= S_{A}\oplus S_{B}^{T}$, defined in \eqref{CtilStil}, are positive definite and positive semi-definite, respectively. According to Theorem \ref{CSCS convergence 1}, the spectral radius $\rho(\mathscr{M}_\gamma)$ of the iteration matrix $\mathscr{M}_\gamma$ in \eqref{iterMatrixM} is less than 1 and the CSCS iteration \eqref{CSCSIteration} converges. All the other cases are similar. \end{proof}

Next theorem addresses the issue of how to obtain a value for $\gamma=\alpha+\beta$ (and naturally for $\alpha$ and $\beta$) that leads to a good convergence speed.

\begin{remark}\label{remarkTetaEta}\rm
Let $\lambda_k=a_k+\mathbf{i}b_k\in \lambda(\widetilde{C})$ and $\mu_k=c_k+\mathbf{i}d_k\in \lambda(\widetilde{S})$, $k=1,\ldots,mn$, satisfy
\begin{equation}\label{thetamin}
\theta_{min}\leq a_k,c_k\leq\theta_{max} \qquad \text{and} \qquad \eta_{min}\leq|b_k|,|d_k|\leq\eta_{max},
\end{equation}
where $\theta_{min}$ and $\theta_{max}$ are the lower and upper bounds, respectively, of the real part of the eigenvalues 
$\lambda(\widetilde{C})\cup \lambda(\widetilde{S})$,  and $\eta_{min}$ and $\eta_{max}$ are the lower and the upper bounds, respectively, of the absolute values
 of the imaginary part of the eigenvalues $\lambda(\widetilde{C})\cup \lambda(\widetilde{S})$. A bound for $\sigma_\gamma$ is given by
\begin{equation}\label{estimate}
\max\limits_{(\theta,\eta) \in\Omega}\frac{(\gamma-\theta)^{2}+\eta^{2}}{(\gamma+\theta)^{2}+\eta^{2}}
\end{equation}
where $\Omega=[\theta_{min},\theta_{max}]\times[\eta_{min},\eta_{max}]$, since
\begin{equation*}
\begin{aligned}
\sigma_\gamma\leq \max\limits_{ \lambda_{k} \in \lambda(\widetilde{C})\cup \lambda(\widetilde{S})} {\left|  
	\frac{\gamma - \lambda_k}{\gamma + \lambda_k} \right|}\cdot \max\limits_{ \mu_{k} \in \lambda(\widetilde{C})\cup \lambda(\widetilde{S})} {\left|  \frac{\gamma - \mu_k}{\gamma + \mu_k} \right|}
=\max\limits_{ \lambda_{k} \in \lambda(\widetilde{S})\cup \lambda(\widetilde{C})} {\left|  
	\frac{\gamma - \lambda_k}{\gamma + \lambda_k} \right|^2}.
\end{aligned}
\end{equation*}
\end{remark}

We may consider that the optimal choice $\gamma^{\star}$ for the shift parameter $\gamma$ is the value that minimizes the above estimate \eqref{estimate}.
The following theorem gives an explict formula for $\gamma^{\star}$, if $\theta_{min}>0$.

\begin{theorem}\label{theo:gammaStar}
If $\theta_{min}\ge0$, the  minimum value
$$
\min\limits_{\gamma>0}\left\{\max\limits_{(\theta,\eta) \in\Omega}\frac{(\gamma-\theta)^{2}+\eta^{2}}{(\gamma+\theta)^{2}+\eta^{2}}\right\} 
$$
is attained at
\begin{equation}\label{gammaStar}
\gamma^{\star} = \begin{cases}
\sqrt{\theta_{min}\theta_{max}-\eta_{max}^{2}}  & \text{\rm for} ~~\eta_{max}< \tilde{\eta}\\[12pt]
\sqrt{\theta_{min}^{2}+\eta_{max}^{2}}
& \text{\rm for} ~~ \eta_{max}\ge\tilde{\eta},
\end{cases}
\end{equation}
and it is equal to
$$
\sigma^{\star}=\begin{cases}\dfrac{\theta_{min}+\theta_{max}-2\sqrt{\theta_{min}\theta_{max}-\eta_{max}^{2}}}{\theta_{min}+\theta_{max}+2\sqrt{\theta_{min}\beta_{max}-\eta_{max}^{2}}}
& \text{\rm for}  ~~\eta_{max}< \tilde{\eta},\\[12pt]
\dfrac{\sqrt{\theta_{min}^{2}+\eta_{max}^{2}} -\theta_{min}}{\sqrt{\theta_{min}^{2}+\eta_{max}^{2}} +\theta_{min}}
& \text{\rm for} ~~ \eta_{max}\ge\tilde{\eta}.
\end{cases}
$$
where $\tilde{\eta}=\sqrt{\theta_{min}(\theta_{max}-\theta_{min})/2}$.
\end{theorem}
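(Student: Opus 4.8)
The plan is to solve the minimax problem in two stages: first carry out the inner maximization over the rectangle $\Omega$ for a fixed $\gamma>0$, reducing it to the comparison of two explicit one-variable functions of $\gamma$, and then minimize the resulting expression over $\gamma>0$.

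For the inner maximization I would write $f(\theta,\eta)=\frac{(\gamma-\theta)^2+\eta^2}{(\gamma+\theta)^2+\eta^2}$ and regard it first as a function of $\eta^2$. A direct differentiation gives $\frac{\partial f}{\partial(\eta^2)}=\frac{4\gamma\theta}{[(\gamma+\theta)^2+\eta^2]^2}\ge 0$, because $\gamma>0$ and $\theta\ge\theta_{min}\ge 0$, so $f$ is nondecreasing in $|\eta|$ and the maximum over $\eta\in[\eta_{min},\eta_{max}]$ is attained at $\eta=\eta_{max}$. Substituting $\eta=\eta_{max}$ and differentiating the resulting $h(\theta)$ in $\theta$, I expect the sign of $h'(\theta)$ to be governed by $\theta^2-(\gamma^2+\eta_{max}^2)$, so that $h$ is decreasing on $[0,\sqrt{\gamma^2+\eta_{max}^2}]$ and increasing afterwards. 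Being valley-shaped, $h$ attains its maximum on $[\theta_{min},\theta_{max}]$ at one of the two endpoints; hence $\max_{\Omega}f=\Phi(\gamma):=\max\{\phi_{min}(\gamma),\phi_{max}(\gamma)\}$, where $\phi_{\theta}(\gamma)=\frac{(\gamma-\theta)^2+\eta_{max}^2}{(\gamma+\theta)^2+\eta_{max}^2}$ and $\phi_{min},\phi_{max}$ denote the choices $\theta=\theta_{min},\theta_{max}$.

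The second stage minimizes $\Phi(\gamma)$. Differentiating $\phi_{\theta}$ in $\gamma$ shows, by the symmetry of the computation, that its sign is governed by $\gamma^2-\theta^2-\eta_{max}^2$, so each $\phi_{\theta}$ has a unique minimum at $\gamma=\sqrt{\theta^2+\eta_{max}^2}$ with value $\frac{\sqrt{\theta^2+\eta_{max}^2}-\theta}{\sqrt{\theta^2+\eta_{max}^2}+\theta}$; in particular $\phi_{min}$ bottoms out at $\gamma_{min}^{\star}=\sqrt{\theta_{min}^2+\eta_{max}^2}$. Next I would locate the crossing of the two curves: clearing denominators in $\phi_{min}(\gamma)=\phi_{max}(\gamma)$ collapses, after cancellation, to $4(\theta_{max}-\theta_{min})\gamma\,[\gamma^2-\theta_{min}\theta_{max}+\eta_{max}^2]=0$, so for $\gamma>0$ the only crossing is $\gamma_c=\sqrt{\theta_{min}\theta_{max}-\eta_{max}^2}$ (real exactly when $\eta_{max}\le\sqrt{\theta_{min}\theta_{max}}$), and the same sign computation shows $\phi_{max}$ dominates for $\gamma<\gamma_c$ while $\phi_{min}$ dominates for $\gamma>\gamma_c$.

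Finally I would assemble these facts into the stated case split, the pivot being the comparison $\gamma_{min}^{\star}\lessgtr\gamma_c$, which is equivalent to $\eta_{max}^2\lessgtr\theta_{min}(\theta_{max}-\theta_{min})/2$, i.e. $\eta_{max}\lessgtr\tilde\eta$. When $\eta_{max}<\tilde\eta$ one has $\gamma_{min}^{\star}<\gamma_c$, so on $(\gamma_c,\infty)$ the active curve $\phi_{min}$ is increasing and on $(0,\gamma_c)$ the active curve $\phi_{max}$ is decreasing; hence $\Phi$ is decreasing then increasing with minimum exactly at $\gamma^{\star}=\gamma_c$, and evaluating $\phi_{min}(\gamma_c)$ via $\gamma_c^2+\eta_{max}^2=\theta_{min}\theta_{max}$ yields the first branch $\frac{\theta_{min}+\theta_{max}-2\gamma_c}{\theta_{min}+\theta_{max}+2\gamma_c}$. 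When $\eta_{max}\ge\tilde\eta$ (covering both the sub-case where $\gamma_c$ is real with $\gamma_c\le\gamma_{min}^{\star}$ and the sub-case $\eta_{max}\ge\sqrt{\theta_{min}\theta_{max}}$ where no crossing exists and $\phi_{min}$ dominates throughout), the minimum of $\Phi$ coincides with the global minimum of $\phi_{min}$, giving $\gamma^{\star}=\gamma_{min}^{\star}=\sqrt{\theta_{min}^2+\eta_{max}^2}$ and the second branch for $\sigma^{\star}$. The main obstacle I anticipate is precisely this last bookkeeping: one must verify on each subinterval which of the two curves is active and that it is monotone there, so that $\max\{\phi_{min},\phi_{max}\}$ is genuinely minimized at the claimed point and not at an interior competitor; the degenerate situations $\theta_{min}=0$ or $\theta_{min}=\theta_{max}$ should be checked separately, since they trivialize one of the branches.
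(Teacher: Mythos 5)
Your proposal is correct, and it is worth noting that the paper itself contains no proof of this theorem: it simply defers to \cite[pp.\ 324--326]{BaiGolub2007} and the erratum \cite{Bai2012}. Your two-stage reduction---first maximizing over the rectangle (monotonicity in $\eta^{2}$ via $\partial f/\partial(\eta^{2})=4\gamma\theta/\big[(\gamma+\theta)^{2}+\eta^{2}\big]^{2}\ge 0$, then the valley shape in $\theta$ forcing endpoint maxima), then minimizing the upper envelope $\Phi=\max\{\phi_{min},\phi_{max}\}$---is essentially the argument of the cited reference, reconstructed independently and self-contained. All of your key computations check out: clearing denominators in $\phi_{min}(\gamma)=\phi_{max}(\gamma)$ does collapse to $4(\theta_{max}-\theta_{min})\,\gamma\,\big[\gamma^{2}-\theta_{min}\theta_{max}+\eta_{max}^{2}\big]=0$; the pivot comparison $\gamma_{min}^{\star}\lessgtr\gamma_{c}$ is equivalent to $\eta_{max}\lessgtr\tilde\eta$; and the endpoint evaluations reproduce both branches of $\sigma^{\star}$ (your derivation also silently corrects the statement's typo $\theta_{min}\beta_{max}$, which should read $\theta_{min}\theta_{max}$). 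The bookkeeping you flag as the main obstacle closes without difficulty: on $(0,\gamma_{c})$ the active curve $\phi_{max}$ is decreasing because $\gamma_{c}^{2}=\theta_{min}\theta_{max}-\eta_{max}^{2}\le\theta_{max}^{2}+\eta_{max}^{2}$, while in the regime $\eta_{max}\ge\tilde\eta$ with a real crossing, $\Phi$ follows the decreasing $\phi_{max}$ up to $\gamma_{c}$ and then the still-decreasing $\phi_{min}$ down to its minimum at $\sqrt{\theta_{min}^{2}+\eta_{max}^{2}}$, so no interior competitor can arise. The degenerate cases are as harmless as you suspect: $\theta_{min}=0$ or $\theta_{min}=\theta_{max}$ forces $\tilde\eta=0$, placing them in the second branch, which in particular recovers the paper's remark that $\gamma^{\star}=\eta_{max}$ when $\theta_{min}=0$. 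What your write-up buys over the paper is a complete, elementary, checkable derivation in place of an external citation; what it still owes the reader is the explicit sentence-level verification of the monotonicity claims above, which you correctly identified as the only nontrivial gluing step.
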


The proof of this theorem can be found in \cite[pp. 324--326]{BaiGolub2007} and \cite{Bai2012}.

 Concerning the choice of the shift parameters $\alpha$ and $\beta$ in the CSCS method, to choose  $\alpha=\beta=\gamma^\star/2$, where $\gamma^\star$ is computed using \eqref{gammaStar}, seems to be a natural choice in the case that $A$ and $B$ have approximate norms. In practice Theorem \ref{theo:gammaStar} gives an efficent procedure to compute $\alpha$ and $\beta$ since we have the explicit formulae for the eigenvalues of the matrices $C_A$, $C_B$, $S_A$ and $S_B$ (see \eqref{explicitEigFormlae}) and we use these formulae to implement CSCS. Thus we can obtain the eigenvalues of $\widetilde{C}$ and $\widetilde{S}$ as a byproduct and verify if the sufficient condition for convergence given by Theorem \ref{CSCS convergence 1} is satisfied. See Algorithm \ref{alg:algorithm7} in Appendix \ref{ApendiceA}. 
Notice that the case $\theta_{min}=0$ brings no difficulty in computing $\gamma^{\star}$ -  
when $\theta_{min}=0$, we have  
$\tilde{\eta}=0$ and $\gamma^{\star}=\eta_{max}$.

\section{\normalsize\bf Numerical results}

 In this section we illustrate the performance of the CSCS algorithm exhibiting some  
numerical examples.  We compare the computational behavior of this method with the Hermitian and skew-Hermitian splitting iteration (HSS) \cite{Bai2011} and with a block  variant of 
the Symmetric Successive Over-Relaxation scheme (BSSOR) \cite{Kadry2007,Starke1991, DavidYoung1, DavidYoung}.

 All the algorithms were implemented in \textsc{Matlab}  (R2020b) in double precision (unit roundoff $\varepsilon=2.2\, 10^{-16}$) on a LAPTOP-KVSVAUU8 with an Intel(R) Core(TM) i5-8250U CPU @ 1.60GHz and 8 GB RAM, under Windows 10 Home. See Appendix \ref{ApendiceA} for details on the \textsc{Matlab} implementations (Algorithms \ref{alg:algorithm1}, \ref{alg:algorithm4} and \ref{alg:algorithm5} for CSCS, HSS and BSSOR, respectively). No parallel \textsc{Matlab} operations were used. 
 
 The built-in functions \texttt{fft} and \texttt{ifft} (Discrete Fourier transform and its inverse) were used in CSCS, in particular to compute the residual $R=C-AX-XB$ (see Algorithm \ref{alg:algorithm3} in Appendix \ref{ApendiceA}).
 The use of sparse techniques is an alternative way to compute the residual $C-AX-XB$. Indeed, if our matrices are stored in sparse format (even if only $A$, $B$ and $C$), then \textsc{Matlab} will
 automatically use highly efficient multiplication. The advantage of using Discrete Fourier transforms over these sparse techniques can only be observed for dense Toeplitz matrices (matrices with a \text{low} sparsity pattern or full matrices). See Example \ref{ExampleFull}.
 
 Hermitian and skew-Hermitian matrices can be diagonalizable by unitary matrices and thus it is posssible to treat the two steps at each iteration of the HSS method very efficiently - the linear systems are all diagonal. The diagonalization process is carried out by \textsc{Matlab} functions \texttt{schur} and \texttt{rsf2csf} for the real and complex Schur decompositions.

 We use a variant of the BSOR (block SOR) which combines two BSOR steps together in one iteration. Specifically, BSSOR is a forward BSOR step followed by a backward BSOR step. The roles of the triangular factors $L$ and $U$ of both $A$ and $B$ are reversed in the second step. The value of the relaxation parameter $\omega$ is the same in both steps. We remark here that 
 the application of SSOR (Symmetric SOR) as a preconditioner for other iterative schemes, in the case of symmetric matrices, was the primary motivation for SSOR, since the convergence rate is usually slightly slower than the convergence rate of SOR with optimal $\omega$. In our comparison study, in particular of the number of iterations needed for convergence, it seems more appropriate to use BSSOR than BSOR given that each iteration of BSSOR consists of two steps, like  CSCS and HSS. 
 
 The occurring linear systems in BSSOR are solved with the \textsc{Matlab} function \texttt{linsolve} 
 which uses $LU$ factorization with partial pivoting when the coefficient matrix is square. This function is more efficient than the backslash operator since it is possible to specify the appropriate solver as determined by the properties of the matrix.
 
  We also compare our method with the Bartels–Stewart direct method as implemented in the \textsc{Matlab} function \texttt{lyap} from the
 Control Toolbox. This function performs the real Schur decompositions of $A$ and $B$ in equation \eqref{original-equation}, lower and upper, respectively, and converts them afterwards to their complex forms; computes the solution of the resulting sylvester equation solving $m$ triangular systems and then transforms this solution back to the solution of the original Sylvester equation.
 See Algorithm \ref{alg:algorithm6} in Appendix \ref{ApendiceA} for our own implementation of this method (\texttt{mylyap} function).

The null matrix was chosen as the initial approximation, $X^{(0)}=O$, in all our numerical experiments, and the stopping criterion implemented was 
\begin{equation}\label{residtol}
\frac{ {\parallel R^{(k)} \parallel}_{F}}{ {\parallel C \parallel}_{F}} \leq tol,
\end{equation}
where $R^{(k)} = C-A X^{(k)} - X^{(k)} B $ is the residual attained at iteration $k$ and $tol$ is the desired accuracy, usually set to $10^{-6}$.

In our first example we analyze a standard Sylvester equation that comes from a finite difference discretization of the two dimensional convection-diffusion equation 
\begin{equation}\label{ConvectionDiffusion}
-(u_{xx}+u_{yy})+\sigma(x,y) u_x+\tau(x,y) u_y=f(x,y),
\end{equation}
posed on the  unit square $(0,1)\times(0,1)$ with Dirichlet-type boundary conditions. Here we consider the case when the coefficients $\sigma$ and $\tau$, which represent the velocity components along the $x$ and $y$
directions, respectively, are constant. See \cite[p. 371]{Chen2000}. A five-point discretization of the operator leads to a linear system 
\begin{equation}\label{equationConDiff}
\cal{A}\boldsymbol{u}=\boldsymbol{v},
\end{equation}
where now $\boldsymbol{u}$ denotes a vector in a finite-dimensional space. We consider a uniform $n\times n$ grid and use standard second-order finite differences for the Laplacian $u_{xx}+u_{yy}$ and either centered or upwind differences for the first derivatives $u_x$ and $u_y$. See \cite[p. 217]{ElmanGolub1991}. With $\boldsymbol{u}$ ordered lexicographically in the natural ordering as $(u_{11},u_{21}, ..,u_{nn})^T$, the coefficient matrix $\cal{A}$ is a block tridiagonal matrix whose $j$th row contains the subdiagonal, diagonal and superdiagonal blocks, all of order $n$, respectively,
\begin{equation}\label{exampleCDA}
{\cal{A}}_{j,j-1}=bI_n, \quad {\cal{A}}_{j,j}=\tridiag(c,a,d), \quad {\cal{A}}_{j,j+1}=eI_n,
\end{equation}
where $a,b,c,d$ and $e$ depend on the discretization. Blocks ${\cal{A}}_{1,0}$ and ${\cal{A}}_{n,n+1}$ are not defined.
Let $h = \dfrac 1{n + 1}$ ($n$ inner grid points in each direction). After scaling by $h^2$, the matrix entries are given by 
\begin{equation}\label{centeredScheme}
a=4, \quad b=-\left(1+\frac{\tau h}2\right), \quad c=-\left(1+\frac{\sigma h}2\right), 
\quad d=-\left(1-\frac{\sigma h}2\right), \quad e=-\left(1-\frac{\tau h}2\right),
\end{equation}
for the centered difference scheme, and by 
\begin{equation}\label{upwindScheme}
a=4+(\tau+\sigma)h, \quad b=-\left(1+\tau h\right), \quad c=-\left(1+\sigma h\right), 
\quad d=-1, \quad e=-1
\end{equation}
for the upwind scheme when $\sigma\ge0$ and $\tau \ge0$. At the $(i, j)$ grid point, the right-hand side satisfies $v_{ij}=h^2f_{ij}$, where $f_{ij}=f(ih,jh)$.

When $e=d$ and $b=c$, the coefficient matrix $\cal{A}$ in the linear system \eqref{equationConDiff} can be written in the form  
$
{\cal{A}}=I_{n} \otimes A + A \otimes I_{n}
$
where $A=\tridiag(c,a/2,d)$. Therefore, the Sylvester equation 
\begin{equation}\label{2DconvEq}
AX+XA^T=V
\end{equation}
is equivalent to the linear system \eqref{equationConDiff}, where $X$ and $V$ are the matrix-stacking of the vectors 
$\boldsymbol{u}$ and $\boldsymbol{v}$, respectively.

Different discretization schemes of equation  \eqref{ConvectionDiffusion} will naturally lead to different Sylvester equations (and different discretization errors). In \cite{Starke1991} it is described how we can obtain a general equation $AX+XB=C$ for any values of $\sigma$ and $\tau$ applying the central differences operator. Matrix $A$ corresponds to the discretization in the $y$-direction and matrix $B$ in the $x$-direction. 
When $\sigma$ and $\tau$ are constant, $A$ and $B$ are tridiagonal Toeplitz matrices defined by
\begin{gather}\label{CDscheme2}	
A=\tridiag\left(-1+\frac{\tau h}2,2,-1-\frac{\tau h}2\right) \text{ and }
B=\tridiag\left(-1+\frac{\sigma h}2,2,-1-\frac{\sigma h}2\right)
\end{gather}
with $A=B$ if $\tau=\sigma$.

\begin{example}\label{Example 01}
	 
Here we solve the Sylvester equation \eqref{2DconvEq} representing the convection-diffusion equation \eqref{ConvectionDiffusion} with 
homogeneous Dirichlet boundary conditions and the function $f$ defined by
$
f(x,y)=\boldsymbol{e}^{x+y}.
$
Different values for $\tau=\sigma$ and the step size $h=\frac{1}{n+1}$ are considered. 

\end{example}

The performance of all the methods, BSSOR, HSS and CSCS, concerning the number of iterations (iter) and CPU time in seconds ($\text{t}_{\text{CPU}}$) are shown in Tables \ref{TableExample0}, 
 for the centered differences scheme \eqref{centeredScheme}. The results for the upwind scheme \eqref{upwindScheme} and for the alternative scheme \eqref{CDscheme2} are pratically the same. 

In the CSCS method we took  $\alpha=\beta\approx\gamma^\star$, where $\gamma^\star$ is computed using the expression \eqref{gammaStar}, and for the HSS method we chose $\alpha=\beta\approx2\gamma^\star$ (as a result of a numerical search around $\gamma^\star/2$); for the relaxation parameter $\omega$ in the BSSOR method we used the heurist estimate given by $\omega=2-10h$ (approximately).

 We report that the initial matrix $C_A$ is positive semi-definite but $S_A$ is positive definite 
 (as well as $\widetilde{C}=I_n\otimes C_{A}+C_{A}\otimes I_n$ and $\widetilde{S}=I_n\otimes S_{A}+S_{A}\otimes I_n$, respectively), and thus the CSCS method always converges. In fact, we can prove that this splitting property of the matrix 
 $A=\tridiag(c,a/2,d)$ is true in general for any positive values of $\sigma$ and $\tau$.

	For $h=0.05$ the BSSOR method converged but very slowly. It took more than 20 minutes to deliver a solution, with relative residual norm of about $10^{-4}$ (1500 iterations), for  $\sigma=2$, and  $10^{-6}$ (946 iterations), for $\sigma=10$. It is not a suitable method for this case.
	
	Overall, the number of iterations needed for convergence by all the methods is relatively high and this reflects the fact that the spectral radi of the iteration matrices are closer to $1$ than to $0$. Nevertheless, CSCS exhibits the best behavior among the three methods. Our method is
	nealy 3 times faster than HSS (5 times for $\sigma=2$, $n=399$) and $8$ times faster (in average) than BSSOR, for $n\leq 199$ (much faster for $n>199$). 
	
\pagebreak 

\renewcommand{\arraystretch}{1.05}
\begin{table}[h!]
	\small
	\centering
	\begin{tabular}{c l |c c c |c c c|c c c}\toprule
		\multicolumn{2}{c|}{}  & \multicolumn{3}{c}{BSSOR}  &  \multicolumn{3}{c}{HSS} &\multicolumn{3}{c}{CSCS} 
		\\ \cline{3-11}
		\multicolumn{1}{c}{} & \multicolumn{1}{l|}{$h=\frac1{n+1}$} &
		\multicolumn{1}{c}{$\omega$}
		&\multicolumn{1}{c}{iter} &\multicolumn{1}{c|}{ $\text{t}_{\text{CPU}}$} & 
		\multicolumn{1}{c}{$\alpha=\beta$} & \multicolumn{1}{c}{iter} &\multicolumn{1}{c|}{$\text{t}_{\text{CPU}}$}&
		\multicolumn{1}{c}{$\alpha=\beta$} & \multicolumn{1}{c}{iter} &\multicolumn{1}{c}{$\text{t}_{\text{CPU}}$}
		\\ \midrule
		& $0.04$ & 1.75  &79  & 0.04  & 0.20    & 85   &0.01   & 0.10  & 42  & 0.005\\
		& $0.02$ & 1.85  &167  & 0.25  & 0.10   & 167   &0.07   & 0.045 & 84  & 0.03\\
		$\sigma=2$ & $0.01$ & 1.95 &309  & 2.25  & 0.050  &328  &0.62   & 0.023 & 168 & 0.25\\
		& $0.005$ & 1.95 &767 & 51.6  &0.025 &648  &5.41   & 0.011 & 342 & 1.90\\
		& $0.0025$   &  - &- & -  &0.013 & 1285  &90.3  & 0.006 & 700 & 18.2\\\midrule
		& $0.04$ & 1.75  & 36 & 0.02 & 0.45    & 64   &0.01  & 0.20  & 29  & 0.006\\
		& $0.02$ & 1.85  & 69  & 0.11  & 0.22    & 126   &0.06   & 0.075 & 56  & 0.02\\
		$\sigma=10$ & $0.01$   & 1.85 &190 & 1.44  & 0.11  & 252   & 0.44  & 0.038 & 108 & 0.17\\
		& $0.005$   & 1.95 &258  & 22.6  & 0.05 & 448  &3.38   & 0.019 & 216 & 1.22\\
		& $0.0025$  & - &- & -  & 0.013 & 841  & 66.0  & 0.0094 & 438 & 20.9\\
		\bottomrule
	\end{tabular}
	\caption{BSSOR, HSS and CSCS performance for the Example \ref{Example 01} (centered difference scheme).}
	\label{TableExample0}
\end{table}

In \cite{Benner2014, Jbilou2013, Jbilou2006} the authors study the numerical solution of \eqref{ConvectionDiffusion} with non-constant coefficients. The discretization matrices ${\cal{A}}_1$ and ${\cal{A}}_2$ from two different linear systems \eqref{equationConDiff} are used to create a Sylvester equation
\begin{equation}\label{TestPurpose}
{\cal{A}}_1X+X{\cal{A}}_2={\cal{C}},
\end{equation}
where ${\cal{C}}$ is randomly generated from values uniformly distributed in $[0, 1]$.  These numerical examples were devised entirely for testing purposes and they are not connected to the solution of \eqref{equationConDiff}. We will imitate this type of examples but in our case matrices ${\cal{A}}_1$ and ${\cal{A}}_2$ must be Toeplitz.

\begin{example}\label{Example 02}
	
We slightly change matrix ${\cal{A}}$, defined by \eqref{exampleCDA}, to have constant diagonal, subdiagonal, superdiagonal, $n^\text{th}$ diagonal and $-n^\text{th}$ diagonal (values $a$, $c$, $d$, $e$ and $b$, respectively). 
For diferente values of $\sigma=\tau$, we define ${\cal{A}}_1$ and ${\cal{A}}_2$, with orders $n^2$ and $m^2$, respectively, and solve \eqref{TestPurpose}. 
Table \ref{TableExample01} shows the outcome of this experiment. 

\end{example}

\renewcommand{\arraystretch}{1.05}
\begin{table}[h!]
	\small
	\centering
	\begin{tabular}{c l |c c c |c c c|c c c}\toprule
		\multicolumn{2}{c|}{}  & \multicolumn{3}{c}{BSSOR}  &  \multicolumn{3}{c}{HSS} &\multicolumn{3}{c}{CSCS} 
		\\ \cline{3-11}
		\multicolumn{1}{c}{} & \multicolumn{1}{l|}{$n^2;m^2$} &
		\multicolumn{1}{c}{$\omega$}
		&\multicolumn{1}{c}{iter} &\multicolumn{1}{c|}{ $\text{t}_{\text{CPU}}$} & 
		\multicolumn{1}{c}{$\alpha=\beta$} & \multicolumn{1}{c}{iter} &\multicolumn{1}{c|}{$\text{t}_{\text{CPU}}$}&
		\multicolumn{1}{c}{$\alpha=\beta$} & \multicolumn{1}{c}{iter} &\multicolumn{1}{c}{$\text{t}_{\text{CPU}}$}
		\\ \midrule
		& $49;100$ & 1.75  &38  & 0.12  & 0.89   & 49   &0.05   & 0.60   & 30   &0.02\\
		& $100;100$  &1.75  & 39  & 0.28   & 0.81   &65   &0.13 & 0.41  & 33 & 0.05\\
		$\sigma_1=\sigma_2=2$ & $225;225$ &1.75 &63  & 4.84 & 0.45  & 92 &1.20  &0.27 &46  & 0.25\\
		& $225;400$   & 1.85  &67  &24.5  & 0.42   & 99   &3.92   &0.28 &58  &0.76\\
		& $400;400$   & 1.85 &74 & 61.7  & 0.35   & 117   &9.20  &0.20 & 61 &1.18\\
		& $625;625$   & 1.75 &98 & 333.7  & 0.29   & 143   &39.1  &0.19 & 89 &4.63\\\midrule
		&  $100;100$  & 1.75 &39  &0.28 & 0.87   & 69 &0.15  &0.87   & 69 & 0.13\\
		$\sigma_1=1; \sigma_2=10$ & $225;400$ &1.85 &67  &27.89 & 0.45   & 100 &4.03  &0.31   & 64 & 0.88\\
		& $625;625$   & $1.85$ &$98$ & $335.0$  &$0.27$ &$150$ &40.7 &0.14 &74  &4.00\\
		& $784;784$   & $1.85$ &$116$ & $700.3$  &$0.29$ &$164$ &89.5 &0.15 &84  &7.00\\
		\bottomrule
	\end{tabular}
	\caption{Performance of BSSOR, HSS and CSCS for the Example \ref{Example 02}.}
	\label{TableExample01}
\end{table}

 We verified that the matrices $\widetilde{C}$ and $\widetilde{S}$ are positive semi-definite and positive definite, respectively, like in the first example. 
	The values for the parameters $\alpha$ and $\beta$ that led to a smaller number of iterations were values greater than the value $\gamma^\star$ given by \eqref{gammaStar}, by a factor of about $10$ or higher. 
	
	The convergence rate of all the methods is faster for this example than for the previous one and, as expected, BSSOR is a very slow method compared to HSS and CSCS. Also in this case, when compared to HSS, the CSCS method is about 5 times faster, for matrices of order $200$, and 8 times faster if the order of the matrices raises above $600$. 

The next numerical example can be found, for instance, in \cite{Bai2011, Wang2013, Zheng2014}. As mentioned in \cite{Bai2011} this class of problems appears associated with the preconditioned Krylov subspace iteration method used to solve the systems of linear equations which arise from the discretization of various differential equations and boundary value problems using finite difference or Sinc-Galerkin schemes.

\begin{example}\label{Example 1}
	Consider the Sylvester equation {\rm (\ref{original-equation})} with matrices $A,B\in\mathbb{C}^{n\times n}$ $(m=n)$ defined by
	$$A=B=M+2rN+\frac{100}{(n+1)^{2}}I,$$
	where $M,N \in\mathbb{C}^{n\times n}$ are Toeplitz tridiagonal matrices, 
	$M=\tridiag(-1,2,-1)$, $N=\tridiag(0.5, 0, -0.5)$. In a more compacted form,
	\[
	A=B=\tridiag\left(-1+r,2+\frac{100}{(n+1)^{2}},-1-r\right).
	\]
	The parameter $r$ depends on the properties of the problem being discretized. 
\end{example} 

Although this problem is similar to the one considered in Example \ref{Example 01}, we decided to show the results of our experiments in order to compare them with the results presented by other authors, namely in \cite{Bai2011, liu2020, Wang2013}.
Table \ref{TableExample1} contains the summary of our experiments for different instances of the parameter $r$ and the 
order $n$ of the matrices.

\renewcommand{\arraystretch}{1.05}
\begin{table}[h!]
	\small
	\centering
	\begin{tabular}{c c |c c c |c c c|c c c}\toprule
		\multicolumn{2}{c|}{}  & \multicolumn{3}{c}{BSSOR}  &  \multicolumn{3}{c}{HSS} &\multicolumn{3}{c}{CSCS} \\ \cline{3-11}
		\multicolumn{1}{c}{} & \multicolumn{1}{c|}{$n$} &
		\multicolumn{1}{c}{$\omega$}
		&\multicolumn{1}{c}{iter} &\multicolumn{1}{c|}{ $\text{t}_{\text{CPU}}$} & 
		\multicolumn{1}{c}{$\alpha$} & \multicolumn{1}{c}{iter} &\multicolumn{1}{c|}{$\text{t}_{\text{CPU}}$}&
		\multicolumn{1}{c}{$\alpha=\beta$} & \multicolumn{1}{c}{iter} &\multicolumn{1}{c}{$\text{t}_{\text{CPU}}$}\\ \midrule
		& $64$   & 1.75 &36  & 0.10 & 0.17 & 123  &0.10   & 0.130 & 32 & 0.02\\
		& $128$  & 1.85 &71  & 1.08 & 0.09 & 244  &0.75  & 0.070 & 60 & 0.14\\
$r=0.01$& $256$  & 1.95 &167 & 24.0  & 0.05 & 453  &19.2 & 0.035 & 112 & 0.88\\
        & $512$  & 1.95 &281 & 670.3 & 0.05  & 520  & 64.0 & 0.017 & 221 & 7.00\\
        & $1024$ & - &- & - & 0.01  & 1204 & 1408& 0.010 & 392 & 61.0\\\midrule
		& $64$   & 1.75 &35  & 0.12  & 0.23 & 90  &0.07   & 0.14 & 31 & 0.02\\
		& $128$  & 1.85 &65  & 1.10  & 0.13 & 145  &0.46  & 0.08 & 55 & 0.13\\
$r=0.1$ & $256$  & 1.85 &139 & 21.4 & 0.09 & 219  &4.26 & 0.05 & 86 & 0.70\\
        & $512$  & 1.75 &455 & 678.1& 0.10  & 314  & 33.4 & 0.10 & 317 & 10.4\\
        & $1024$ & - &- & - & 0.10  & 607 & 636.4& 0.10 & 610 & 100.5\\\midrule
		& $64$   & 1.5  &22  & 0.06  & 0.81 & 40  &0.04 &0.26 & 26 & 0.01\\
		& $128$  & 1.5  &33  & 0.50  & 0.62 & 60  &0.19  & 0.16 & 41 & 0.09\\
  $r=1$ & $256$  & 1.75 &47  &7.10 & 0.51 & 92  &1.81 & 0.11 & 61 & 0.45\\
        & $512$  & 1.75 &62 & 114.2& 0.25  & 132  & 16.7 & 0.25 & 138 & 3.95\\
        & $1024$ & - &- & - & 0.25 & 192 & 173.0 &0.15 & 171 & 21.3\\
		\bottomrule
	\end{tabular}
	\caption{Performance of BSSOR, HSS and CSCS methods for the Example \ref{Example 1}.}
	\label{TableExample1}
\end{table}

\pagebreak
The values of the shift parameter $\alpha$ in the HSS method are the values which were presented in \cite{Bai2011}, for $n\leq256$ (see $\omega_{\exp}$ and $\alpha_{\exp}$ in \cite[Table 4.2]{Bai2011}, obtained through an experimental search).
The values given to the shift parameters $\alpha$ and $\beta$ in the CSCS method were determined using the expression \eqref{gammaStar} - we computed $\gamma^\star$ and let $\alpha=\beta$ between $\gamma^\star/8$ and $\gamma^\star/2$ - and these values are also used with HSS when $n>256$.

In this example the convergence is faster than in Example \ref{Example 01}, in particular when $r=1$. Matrices $\widetilde{C}$ and $\widetilde{S}$ are both positive definite and the CSCS method outperforms the HSS and BSSOR methods both in terms of the number of iterations and in what respects to the computational efficiency. BSSOR may be very slow for matrices of order $n\ge512$,  taking more than $20$ minutes to converge. Compared to HSS the CPU time required by CSCS to converge is, in most cases, $4$ to $8$ times smaller ( in extreme cases, this factor may be much smaller). Except for $r=1$, our implementation of HSS demands a higher number of iterations than shown in \cite{Bai2011} for this same method, but despite this, in all cases the CPU time needed is reduced.



 
The advantage of using FFT operations in the CSCS method can be entirely appreciated when we take $A$ and $B$ to be full Toeplitz matrices. Next example considers this case and reports the CPU elapsed times for CSCS and \textsc{Matlab} function {\texttt{lyap}}.
 
\begin{example}\label{ExampleFull}
This example  takes positive definite circulant and skew-circulant matrices $C_A$ and $S_A$ (obtained using translation of origin on randomly generated matrices) and forms $A=C_A+S_A$, $B=A$. Matrix $C$ is chosen to be the matrix attained when all the entries in $X$ are set to be $1$. 

We take $\alpha=\beta=\gamma^\star/2$ where $\gamma^\star$ is computed using the expression \eqref{gammaStar} in Theorem \ref{theo:gammaStar}. See Table \ref{TableExampleFull} for a comparison of the efficiency of CSCS and {\texttt{lyap}}. 

\renewcommand{\arraystretch}{1.05}
\begin{table}[h!]
	\small
	\centering
	\begin{tabular}{c |c c c c |c c }\toprule
		\multirow{2}{*}{$n$}  & \multicolumn{4}{c}{CSCS}  &
		\multicolumn{2}{c}{\texttt{lyap}}\\ \cline{2-7}
	    &\multicolumn{1}{c}{$\alpha=\beta$}
		&\multicolumn{1}{c}{iter} &\multicolumn{1}{c}{ resid} &\multicolumn{1}{c|}{ $\text{t}_{\text{CPU}}$} & 
		\multicolumn{1}{c}{resid} & \multicolumn{1}{c}{$\text{t}_{\text{CPU}}$}\\ \midrule
		\multirow{2}{*}{$100$}& \multirow{2}{*}{$43.49$} &5&$1.1\,\;10^{-6}$&$0.017$&\multirow{2}{*}{$2.2\,\;10^{-15}$}&\multirow{2}{*}{$0.013$}\\
		& &12&$1.9\,\;10^{-15}$&0.040&\\
		\multirow{2}{*}{$250$}& \multirow{2}{*}{$103.75$} &5&$2.0.4\,\;10^{-6}$&$0.07$&\multirow{2}{*}{$1.8\,\;10^{-15}$}&\multirow{2}{*}{$0.10$}\\
		& &13&$1.5\,\;10^{-15}$&0.14&\\
		\multirow{2}{*}{$500$}& \multirow{2}{*}{$208.0$} &5&$2.0\,\;10^{-6}$&$0.32$&\multirow{2}{*}{$1.9\,\;10^{-15}$}&\multirow{2}{*}{$0.26$}\\
		& &12&$4.0\,\;10^{-15}$&0.67&\\
		\multirow{2}{*}{$1000$}& \multirow{2}{*}{$426.6$} &5&$1.3\,\;10^{-6}$&$1.42$&\multirow{2}{*}{$2.1\,\;10^{-15}$}&\multirow{2}{*}{$1.27$}\\
		& &12&$9.4\,\;10^{-15}$&3.06&\\
		\multirow{2}{*}{$1500$}& \multirow{2}{*}{$645.4$} &5&$1.2\,\;10^{-6}$&$3.20$&\multirow{2}{*}{$2.3\,\;10^{-15}$}&\multirow{2}{*}{$3.34$}\\
		& &13&$3.7\,\;10^{-16}$&7.43&\\
		\multirow{2}{*}{$2000$}& \multirow{2}{*}{$856.97$} &5&$1.7\,\;10^{-6}$&$5.62$&\multirow{2}{*}{$2.5\,\;10^{-15}$}&\multirow{2}{*}{$7.01$}\\
		& &12&$8.2\,\;10^{-15}$&12.01&\\
		\multirow{2}{*}{$2500$}& \multirow{2}{*}{$1080.1$} &5&$1.7\,\;10^{-6}$&$10.64$&\multirow{2}{*}{$2.7\,\;10^{-15}$}&\multirow{2}{*}{$16.01$}\\
		& &13&$1.0\,\;10^{-15}$&23.78&\\
		\bottomrule
	\end{tabular}
	\caption{Performance of CSCS and  {\texttt{lyap}} for full matrices $A$ and $B$.}
	\label{TableExampleFull}
\end{table}
\end{example}

If the relative accuracy demanded is ${{\cal{O}}(10^{-6})}$, which is often enough in many applications, the CSCS method is comparable to or even faster than \texttt{lyap}. When full accuracy ${{\cal{O}}(\varepsilon)}$ is important, more iterations are needed and CSCS takes approximately twice as long as \texttt{lyap}, which, however, can still be considered very satisfactory since these methods are fast even for large dimensions like $n \ge 1000$. 

We may take our function \texttt{mylyap} (see Algorithm \ref{alg:algorithm6}) in this comparison study, which is possibly the fairest comparison study to present, given that in our implementations we are not capable of reproducing the \textsc{Matlab} internal linear systems solvers used by \texttt{lyap}. We clearly aknowledge that, when full accuracy ${{\cal{O}}(\varepsilon)}$ is required, CSCS method is always faster, about 10 times faster, than \texttt{mylyap} for full Toeplitz matrices $A$ and $B$ (\texttt{mylyap} is, as expected, slower than \texttt{lyap}).

\section{Conclusions} We considered the problem of solving a large continuous Sylvester equation\break $ AX + XB = C$ where the coefficient matrices $A$ and $B$ are assumed to be Toeplitz matrices and we have devised the CSCS iteration which is a method based on the circulant and skew-circulant splittings of the matrices $A$ and $B$. The spectral properties of these structured matrices allow the use of fast Fourier transforms (FFTs) which reduces significantly the operation count of matrix multiplication and thus the  computational efficiency of the algorithm. We have also analyzed sufficient conditions for the convergence of the CSCS iteration and have derived an upper bound for its convergence factor. The numerical experiments we have carried out illustrate that CSCS is a faster and more robust iterative algorithm than the alternatives HSS and BSSOR. The advantage of using FFT operations in the CSCS method can be entirely appreciated when we take $A$ and $B$ to be full Toeplitz matrices and in this case CSCS is a very competitive algorithm even when compared with the \textsc{Matlab} function \texttt{lyap} which implements the Bartels–Stewart direct method.
Moreove, since FFT-based operations have very high parallel potentialities, our CSCS algorithm is therefore suited for parallel frameworks.

\newpage
\appendix
\section{Implementation details}
\label{ApendiceA}
\begin{algorithm}
	\caption{\textbf{\; CSCS -- circulant and skew-circulant splitting iteration}}
	\label{alg:algorithm1}
	\begin{algorithmic}
		\Statex
		
		\hspace*{-0.4cm}\textbf{Input:} Toeplitz matrices $A, B$, $C$ (orders $n\times n$, $m\times m$ and $n\times m$),
		
		\hspace*{0.1cm} initial approximation $X_0$, relative residual tolerance $tol$, maximum number of iterations $maxit$ 
		\Statex
		\hspace*{-0.4cm}\textbf{Output:} Solution $X$ of the Sylester equation $AX+XB=C$
		
		\Statex
		
		\State [$C_A$,\,$S_A$] = \texttt{CSsplitting}($Acol_1$,$Arow_1$) \Comment{ circulant and skew-circulant splittings of $A$ and $B$}
		\State [$C_B$,\,$S_B$] = \texttt{CSsplitting}($Bcol_1$,$Brow_1$)
		\Statex	
		\State $D_n=\texttt{exp}((0:n-1)/n*\texttt{pi}*\textbf{i})$ \Comment{ $D_n=[1,\operatorname{\boldsymbol{e}}^{\frac{\pi }{n}\textbf{i}},
		\ldots,\operatorname{\boldsymbol{e}}^{\frac{(n-1)\pi}{n}\textbf{i}}]$}
		\State $D_m=\texttt{exp}((0:m-1)/m*\texttt{pi}*\textbf{i})$
		
		
	    \State $D_{cA}=\texttt{diag}\big(\texttt{ifft}\big(\texttt{fft}(C_A).\texttt{'}\big).\texttt{'}\big)$       
	    \Comment{ $D_{cA}=\texttt{diag}(\Lambda_A); \, \Lambda_A=F_AC_AF_A^*$}  
	    	
		\State $D_{cB}=\texttt{diag}\big(\texttt{ifft}\big(\texttt{fft}(C_B).\texttt{'}\big).\texttt{'}\big)$           
		\Comment{ $D_{cB}=\texttt{diag}(\Lambda_B);\, \Lambda_B=F_BC_BF_B^*$}

		\State $D_{sA}=\texttt{diag}(\texttt{fft}(\texttt{ifft}(D_n\texttt{'}\bs{.*}S_A\bs{.*}D_n).\texttt{'}).\texttt{'})$ 
		\Comment{ $D_{sA}=\texttt{diag}(\Sigma_A); \, \Sigma_A=\hat{F}_AS_A\hat{F}_A^*$} 
		\State $D_{sB}=\texttt{diag}(\texttt{fft}(\texttt{ifft}(D_m\texttt{'}\bs{.*}S_B\bs{.*}D_m).\texttt{'}).\texttt{'})$ 
		\Comment{ $D_{sB}=\texttt{diag}(\Sigma_B); \, \Sigma_B=\hat{F}_BS_B\hat{F}_B^*$}  
		
		\State \Comment{ $\bs{.*}$ for element-wise product}
		
		\State [$\alpha$,\,$\beta$] = \texttt{shifts}($D_{cA},D_{cB},D_{sA},D_{sB}$) \Comment{shift parameters $\alpha$ and $\beta$}	
		\State $D_1=D_{C_A}+\alpha$; \, $D_2=D_{C_B}+\beta$; \, $D_3=D_{S_A}+\alpha$;\, $D_4=D_{S_B}+\beta$
		
		\Statex	
		\State  $A=\texttt{sparse}(A);\; B=\texttt{sparse}(B); \; C=\texttt{sparse}(C)$ \Comment{ \textsc{Matlab} sparse matrix storage format}

		\State $X=X_0$
		
		\State $R=\texttt{resid}(X,C,D_{cA},D_{cB},D_{sA},D_{sB},D_n,D_m)$ \Comment{  $R=C-AX-XB$ using \eqref{residualGeneral}}
		\Statex
		\State $normR=\texttt{norm}(R,\texttt{`fro'})$; \;  
		$normC=\texttt{norm}(C,\texttt{`fro'})$ \Comment{ Frobenious norms of $R$ and $C$}
	    
		\State $\texttt{iter}=0$ \Comment{ number of iterations counter}
		
		\While {\big($(normR/normC)>tol$ \, and \, $\texttt{iter}<maxit$\big)}
		\State \% First step
		\State $R=(\texttt{ifft}(\texttt{fft}(R).\texttt{'}).\texttt{'})*(\texttt{sqrt}(m/n))$ 
		\Comment{ transform \eqref{residual1}}
		
		\State $Z=R./(D1+D2.\texttt{'})$  \Comment{ solve  $(\alpha I+\Lambda_A)Z+Z(\beta I+\Lambda_B)=R$} 
		
		\State $X=X+\texttt{ifft}((\texttt{fft}(Z.\texttt{'})).\texttt{'})$ \Comment{ update $X$; $X\leftarrow X+{F_A^*}{Z}F_B$}
		
		\Statex
		\State \% Second step
		
		\State $R=\texttt{resid}(X,C,D_{cA},D_{cB},D_{sA},D_{sB},D_n,D_m)$
		\Comment{ use \eqref{residualGeneral} to compute R}
		
		
		\State $R=\texttt{ifft}((\texttt{fft}(((D_n\texttt{'}).*R.*D_m).\texttt{'})).\texttt{'})*(\texttt{sqrt}(n/m))$  
		\Comment{ transform \eqref{residual2}}
		
		\State $Z=R./(D_3+D_4.\texttt{'})$   
		\Comment{ solve $(\alpha I+\Sigma_A){Z}+{Z}(\beta I+\Sigma_B)=R$ }
		
		\State $X=X+(D_n.\texttt{'}).*(\texttt{ifft}(\texttt{fft}(Z).\texttt{'}).\texttt{'}).*\texttt{conj}(D_m)$
		\Comment{ update $X$; $X\leftarrow X+\hat F_A^*Z\hat F_B$}
		\Statex
			
	    \State $R=\texttt{resid}(X,C,D_{cA},D_{cB},D_{sA},D_{sB},D_n,D_m)$
		
		\State $normR=\texttt{norm}(R,\texttt{`fro'})$
		
		\State $\texttt{iter}=\texttt{iter}+1$
		
	    \EndWhile	
	    
	    
	    \If{ $\texttt{iter}>=\texttt{maxit}$}
	    \State \texttt{disp}(\texttt{`}Maximum number of iterations exceed.\texttt{'})
	    \EndIf
	
	\end{algorithmic}
\end{algorithm}

\begin{algorithm}
	\caption{\textbf{\; CSsplitting -- circulant and skew-circulant splitting of a Toeplitz matrix}}
	\label{alg:algorithm2}
	\begin{algorithmic}
		\Statex
		
		\hspace*{-0.4cm}\textbf{Input:} First column and first row, $c$ and $r$, of a $n\times n$ Toeplitz matrix $A$ 
		
		\hspace*{0.1cm} ($c$ and $r$ should be given as rows, $c(1)$ should be equal to $r(1)$)

		\Statex
		\hspace*{-0.4cm}\textbf{Output:} $C_A$ and $S_A$ with $A=C_A+S_A$, circulant and skew-circulant splitting of $A$
		
		\Statex
		
		\State \% Circulant part 
		\State $C_{Ac}=(c+[0,\texttt{fliplr}(r(2:n))])/2$
		\State $C_{Ar}=[r(1)/2, \texttt{fliplr}(C_{Ac}(2:n))]$
		\State $C_A=\texttt{toeplitz}(C_{Ac},C_{Ar})$
		
		\Statex
			
		\State \% Skew-circulant part
		\State $S_{Ac}=(c-[0,\texttt{fliplr}(r(2:n))])/2$   
		\State $S_{Ar}=[r(1)/2 -\texttt{fliplr}(S_{Ac}(2:n))]$
		\State $S_A=\texttt{toeplitz}(S_{Ac},S_{Ar})$		
	
	    \medskip
		
	\end{algorithmic}
\end{algorithm}

\bigskip
\bigskip

\begin{algorithm}
	\caption{\textbf{\; resid -- residual for a given approximation $X$}}
	\label{alg:algorithm3}
	\begin{algorithmic}
		\Statex
		
		\hspace*{-0.4cm}\textbf{Input:} Approximation $X$ and matrix $C$ of $AX+XB=C$, 
		
		\hspace*{0.1cm} $D_{cA}$, $D_{cB}$, $D_{sA}$, $D_{sB}$, $D_n$ and $D_m$, computed in \texttt{CSCS} function
	
		\Statex
		\hspace*{-0.4cm}\textbf{Output:} Residual $R=C-AX-XB$ using  \eqref{residualGeneral}
		
		\Statex

		\State $p_1=\texttt{ifft}(D_1\bs{.*}\texttt{fft}(X))$
		\Comment{ $p_1=F_A^*\Lambda_{A}F_AX$}

		\State $p_2=(D_n.\texttt{'})\bs{.*}(\texttt{fft}(D_3\bs{.*}\texttt{ifft}((D_n\texttt{'})\bs{.*}X)))$  
		\Comment{ $ p_2=\hat{F}_A^*\Sigma_A \hat{F}_AX $}

		\State $p_3=\texttt{fft}(((\texttt{ifft}(X.\texttt{'}).\texttt{'})\bs{.*}(D_2.\texttt{'})).\texttt{'}).\texttt{'}$
		\Comment{ $p_3=XF_B^*\Lambda_{B}F_B$}

		\State $p_4=(\texttt{fft}((X\bs{.*}D_m).\texttt{'})).\texttt{'}$                     
		\State $p_4=(\texttt{ifft}((p_4\bs{.*}(D_4.\texttt{'})).\texttt{'}).\texttt{'})\bs{.*}(\texttt{conj}(D_m))$   
		\Comment{ $p_4=X\hat{F}_B^*\Sigma_B\hat{F}_B$} 
		
		\State $R=C-p_1-p_2-p_3-p_4$
		
		\medskip
		
\end{algorithmic}				
\end{algorithm}

\begin{algorithm}
	\caption{\textbf{\; shifts -- find $\gamma^{\star}$ and shift parameters $\alpha$ and $\beta$}}
	\label{alg:algorithm7}
	\begin{algorithmic}
		\Statex
		
		\hspace*{-0.4cm}\textbf{Input:} $D_{cA},D_{cB},D_{sA}$, $D_{sB}$ eigenvalues of $C_A$, $C_B$, $S_A$, $S_B$, respectively
	
		\Statex
		\hspace*{-0.4cm}\textbf{Output:} shifts $\alpha$ and $\beta$
		
		\Statex
		
		\State $D_{\widetilde{C}}=D_{cA}+D_{cB}.\texttt{'}$; \, $D_{\widetilde{S}}=D_{sA}+D_{sB}.\texttt{'}$ \Comment{ eigenvalues of $\widetilde{C}$ and $\widetilde{S}$}

		\Statex
		\State $D_1=[\texttt{real}(D_{\widetilde{C}});\,\texttt{real}(D_{\widetilde{S}})]$; \; 
		       $D_2=\texttt{abs}\big([\texttt{imag}(D_{\widetilde{C}});\, \texttt{imag}(D_{\widetilde{S}})]\big)$
		       \Comment{ see Remark \ref{remarkTetaEta}}
		\State $\theta_{min}=
		\texttt{min}\big(\texttt{min}(D_1)\big)$;\; $\theta_{max}=
		    \texttt{max}\big(\texttt{max}(D_1)\big)$
		
	    \State $\eta_{min}=\texttt{min}\big(\texttt{min}(D_2)\big)$;\, $\eta_{max}=\texttt{max}\big(\texttt{max}(D_2)\big)$	
		
		\Statex
		
		\If{$\theta_{min}\ge0$} \Comment{ see Theorem \ref{theo:gammaStar}}
		\If{$\eta_{max}<\sqrt{\theta_{min}*(\theta_{max}-\theta_{min})/2}$}
		\State $\gamma^{\star}=\sqrt{\theta_{min}\theta_{max}-\eta_{max}^{2}}$
		\Else
		\State {$\gamma^{\star}=\sqrt{\theta_{min}^{2}+\eta_{max}^{2}}$}
		 \EndIf
		\Else
		\State $\gamma^{\star}=1$ \Comment{ random value for $\gamma^{\star}$ if $\theta_{min}<0$} 
		\State \texttt{disp}(\texttt{`}Warning: $\theta_{min}<0$. We let $\gamma^{\star}=1$.\texttt{'})
		\EndIf
		\State $\alpha=\gamma^{\star}/2$; \;  $\beta=\alpha$

\end{algorithmic}				
\end{algorithm}

\begin{algorithm}
	\caption{\textbf{\; HSS -- Hermitian and skew-Hermitian splitting iteration}}
	\label{alg:algorithm4}
	\begin{algorithmic}
		\Statex
		
		\hspace*{-0.4cm}\textbf{Input:} Toeplitz matrices $A, B$, $C$ (orders $n\times n$, $m\times m$ and $n\times m$),
		
		\hspace*{0.1cm} initial approximation $X_0$, shift parameters $\alpha$ and $\beta$, relative residual tolerance $tol$,
		
		\hspace*{0.1cm}  maximum number of iterations $maxit$ 
		\Statex
		\hspace*{-0.4cm}\textbf{Output:} Solution $X$ of the Sylester equation $AX+XB=C$
		
		\Statex
		\State\Comment{ Hermitian and skew-Hermitian splittings of $A$ and $B$}
		\State $H_1=(A+A\texttt{'})/2$ 
		\State $S_1=(A-A\texttt{'})/2$ 
		\State $H_2=(B+B\texttt{'})/2$
		\State $S_2=(B-B\texttt{'})/2$ 
		\State  \Comment{ schur forms of $H_1$, $H_2$, $S_1$ and $S_2$ (diagonalizable)}    
		 \State [$Q_1$,\,$D_1$] = \texttt{schur}(\texttt{full}($H_1$))    
		                         \Comment{ $H_1=Q_1D_1Q_1^*$}             
		 
		 \State [$Q_2$,\,$D_2$] = \texttt{schur}(\texttt{full}($H_2$)) \Comment{ $H_2=Q_2D_2Q_2^*$}
		 
		 \Statex

	     \State [$Q_3$,\,$D_3$] = \texttt{schur}(\texttt{full}($S_1$))
		 \State [$Q_3$,\,$D_3$] = \texttt{rsf2csf}($Q_3$,$D_3$)  \Comment{ $S_1=Q_3D_3Q_3^*$}
		
		 \State [$Q_4$,\,$D_4$] = \texttt{schur}(\texttt{full}($S_2$))
		 \State [$Q_4$,\,$D_4$] = \texttt{rsf2csf}($Q_4$,$D_4$) \Comment{ $S_2=Q_4D_4Q_4^*$}
		 
		\Statex
		
		\State  \Comment{ diagonal elements of $D_1+\alpha I_n$, $D_2+\beta I_m$, $D_3+\alpha I_n$ and $D_4+\beta I_m$}
		
		\State $D_1=\texttt{diag}(D_1)+\alpha$; \, $D_2=\texttt{diag}(D_2)+\beta$ 
		\State $D_3=\texttt{diag}(D_3)+\alpha$;\, $D_4=\texttt{diag}(D_4)+\beta$

		\Statex	
		\State  $A=\texttt{sparse}(A);\; B=\texttt{sparse}(B); \; C=\texttt{sparse}(C)$ \Comment{ \textsc{Matlab} sparse matrix storage format}

		\State $X=X_0$
		
		\State $R=C-A*X-X*B$ 
		\Statex
		\State $normR=\texttt{norm}(R,\texttt{`fro'})$; \;  
		$normC=\texttt{norm}(C,\texttt{`fro'})$ \Comment{ Frobenious norms of $R$ and $C$}
		
		\State $\texttt{iter}=0$ \Comment{ number of iterations counter}
		\While {\big($(normR/normC)>tol$ \, and \, $\texttt{iter}<maxit$\big)}  
		\State \% First step 
		\State $R=Q_1\texttt{'}*R*Q_2$
		\State $Z=R./(D1+D2.\texttt{'})$  \Comment{ solve $(\alpha I + D_1)Z+Z(\beta I+D_2)=R$}	
		\State $X=X+Q_1*Z*Q_2\texttt{'}$  \Comment{ update $X$; $X\leftarrow X+Q_1ZQ_2^*$}
		\Statex
		\State \% Second step
		\State $R=C-A*X-X*B$
		\State $R=Q_3\texttt{'}*R*Q_4$
		\State $Z=R./(D_3+D_4.\texttt{'})$   \Comment{ solve $(\alpha I+D_3){Z}+{Z}(\beta I+D_4)=R$ }
		
		\State $X=X+Q_3*Z*Q_4\texttt{'}$  \Comment{ update $X$; $X\leftarrow X+Q_3ZQ_4^*$}
		
		\Statex 
		
		\State $R=C-A*X-X*B$
		
		\State $normR=\texttt{norm}(R,\texttt{`fro'})$
		
		\State $\texttt{iter}=\texttt{iter}+1$
		
		\EndWhile	
		\Statex
		
		\If{ $\texttt{iter}>=\texttt{maxit}$}
		\State \texttt{disp}(\texttt{`}Maximum number of iterations exceed.\texttt{'})
		\EndIf
	\end{algorithmic}
\end{algorithm}

\begin{algorithm}
	\caption{\textbf{\; BSSOR --  Block Symmetric Successive Over-Relaxation iteration }}
	\label{alg:algorithm5}
	\begin{algorithmic}
		\Statex
		
		\hspace*{-0.4cm}\textbf{Input:} matrices $A, B$, $C$ (orders $n\times n$, $m\times m$ and $n\times m$),
		
		\hspace*{0.1cm} initial approximation $X_0$, relaxation parameter $\omega$, relative residual tolerance $tol$,
		
		\hspace*{0.1cm}  maximum number of iterations $maxit$ 
		\Statex
		\hspace*{-0.4cm}\textbf{Output:} Solution $X$ of the Sylester equation $AX+XB=C$

\Statex	
\State \% Two steps with the same parameter $\omega$
\State \%  $(D_1/\omega+L_1)X_{k+\frac{1}{2}}+X_{k+\frac{1}{2}}(D_2/\omega+U_2)=C+\big[(1-w)/wD_1-U_1\big]X_k+X_k\big[(1-w)/wD_2-L_2\big]$
\State \%  $(D_1/\omega+U_1)X_{k+1}+X_{k+1}(D_2/\omega+L_2)=C+\big[(1-w)/wD_1-L_1\big]X_{k+\frac{1}{2}}+X_{k+\frac{1}{2}}\big[(1-w)/wD_2-U_2\big]$
	
\Statex
\State \% Using  residuals, $R_k$ and $R_{k+\frac{1}{2}}$, and new variables, $Z_{k+1}$ and $Z_{k+\frac{1}{2}}$
\State \%  $(D_1/\omega+L_1)Z_{k+\frac{1}{2}}+Z_{k+\frac{1}{2}}(D_2/\omega+U_2)=R_k$; \quad $X_{k+\frac{1}{2}}=X_k+Z_{k+\frac{1}{2}}$
\State \%  $(D_1/\omega+U_1)Z_{k+1}+Z_{k+1}(D_2/\omega+L_2)=R_{k+\frac{1}{2}}$; \quad $X_{k+1}=X_{k+\frac{1}{2}}+Z_{k+1}$

\Statex
\State $D_1=\texttt{diag}(A)$; \; $L_1=\texttt{tril}(A,-1)$; \; $U_1=\texttt{triu}(A,1)$ \Comment{ diagonal, strictly lower and }
\State $D_2=\texttt{diag}(B)$; \; $L_2=\texttt{tril}(B,-1)$; \; $U_2=\texttt{triu}(B,1)$ \Comment{ strictly upper parts of $A$ and $B$}

\Statex

\State $D_1=D_1/\omega$; \; $L_1=\texttt{diag}(D_1)+L_1$;\; $U_1=\texttt{diag}(D_1)+U_1$
\State $D_2=D_2/\omega$; \; $L_2=\texttt{diag}(D_2)+L_2$; \; $U_2=\texttt{diag}(D_2)+U_2$

\Statex	
\State  $A=\texttt{sparse}(A);\; B=\texttt{sparse}(B); \; C=\texttt{sparse}(C)$ \Comment{ \textsc{Matlab} sparse matrix storage format}

\State $X=X_0$; \;$R=C-A*X-X*B$ \Comment{ initial residual}
\State $normR=\texttt{norm}(R,\texttt{`fro'})$; \;  
$normC=\texttt{norm}(C,\texttt{`fro'})$ \Comment{ Frobenious norms of $R$ and $C$}
\State $Z$ = \texttt{zeros}($n$,$m$)\Comment{ preallocation for speed}	    
\State $\texttt{index}_1$ = \texttt{eye}($n$,\texttt{`logical'}) \Comment{ logical indexing for the diagonal elements}
\State $\texttt{index}_2$ = \texttt{eye}($m$,\texttt{`logical'}) \Comment{ logical indexing for the diagonal elements}
\State $\texttt{iter}=0$ \Comment{ number of iterations counter}
\While {\big($(normR/normC)>tol$ \, and \, $\texttt{iter}<maxit$\big)}  
\State \% First step                   \Comment{ solve $L_1Z+ZU_2=R$}	
 \State \texttt{opts.LT} = \texttt{true}; \; \texttt{opts.UT} = \texttt{false};   
                          \Comment{ \texttt{LT} - lower triangular option to \texttt{linsolve}}

\State $L_1(\texttt{index}_1)=D_1+D_2(1)$    \Comment{ $L_1=L_1+D_2(1)I_n$}

\State $Z(:,1)=\texttt{linsolve}(L_1,R(:,1),\texttt{opts})$            \Comment{ column $1$ of $Z$}

\For {$k=2:m$}  \Comment{ columns $2$ through $m$ of $Z$}
\State $L_1(\texttt{index}_1)=D_1+D_2(k)$     \Comment{ $L_1=L_1+D_2(k)I_n$}
\State $Z(:,k)=\texttt{linsolve}(L_1,R(:,k)-Z(:,1:(k-1))*U_2(1:(k-1),k),\texttt{opts})$
\EndFor
\State $X=X+Z$; \;  $R=C-A*X-X*B$ \Comment{ update $X$ and $R$}
\State \% Second step \Comment{ solve $U_1Z+ZL_2=R$}
 \State \texttt{opts.LT} = \texttt{false}; \; \texttt{opts.UT} = \texttt{true};   
\Comment{ \texttt{UT} - upper triangular option to \texttt{linsolve}}

\State $L_2(\texttt{index}_2)=D_2+D_1(n)$            \Comment{ $L_2=L_2+D_1(n)I_n$}
\State $Z(n,:)=\texttt{linsolve}(L_2.\texttt{'},R(n,:).\texttt{'},\texttt{opts})$ \Comment{ last row of $Z$}

\For {$k=n-1:-1:1$} \Comment{ rows $n-1$ through $1$ of $Z$}
\State $L_2(\texttt{index}_2)=D_2+D_1(k)$    \Comment{      $L_2=L_2+D_1(k)I_n$  } 
\State $Z(k,:)=\texttt{linsolve}(L_2.\texttt{'},(R(k,:)-U_1(k,k+1:n)*Z(k+1:n,:)).\texttt{'},\texttt{opts})$
\EndFor      

\State $X=X+Z$; \; $R=C-A*X-X*B$  \Comment{ update $X$ and $R$}

\State $normR=\texttt{norm}(R,\texttt{`fro'})$

\State $\texttt{iter}=\texttt{iter}+1$

\EndWhile	

\If{ $\texttt{iter}>=\texttt{maxit}$}
\State \texttt{disp}(\texttt{`}Maximum number of iterations exceed.\texttt{'})
\EndIf
\end{algorithmic}
\end{algorithm}

\begin{algorithm}
	\caption{\textbf{\; \texttt{mylyap} --  Bartels-Stewart method }}
	\label{alg:algorithm6}
	\begin{algorithmic}
		\Statex
		
		\hspace*{-0.4cm}\textbf{Input:} matrices $A, B$, $C$ (orders $n\times n$, $m\times m$ and $n\times m$)

		\Statex
		\hspace*{-0.4cm}\textbf{Output:} Solution $X$ of the Sylester equation $AX+XB=C$
		\Statex
		\State [$Q_1$,\,$T_1$] = \texttt{schur}(\texttt{full}($A\texttt{'}$))     
		\State [$Q_1$,\,$T_1$] =  \texttt{rsf2csf}($Q_1$,$T_1$)              
		\State $T_1=T_1\texttt{'}$  \Comment{ lower complex schur form of $A$; $A=Q_1T_1Q_1^*$}
		
		\Statex
		\State [$Q_2$,\,$T_2$] = \texttt{schur}(\texttt{full}($B$))
		\State [$Q_2$,\,$T_2$] = \texttt{rsf2csf}($Q_2$,$T_2$) \Comment{ upper complex schur form of $B$; $B=Q_2T_1Q_2^*$}
		
		\Statex
		
		\State $dT_1=\texttt{diag}(T_1)$ \Comment{ diagonal elements of $T_1$ and $T_2$ }
		\State $dT_2=\texttt{diag}(T_2)$

		\State \texttt{index} = \texttt{eye}($n$,\texttt{`logical'})
		
		\Statex
		\State \% Solution of $T_1X+XT_2=Q_1^*CQ_2$
		\State $C=Q_1\texttt{'}*C*Q_2$
	    \State $X$ = \texttt{zeros}($n$,$m$)\Comment{ preallocation for spead}	
	    \State \texttt{opts.LT} = \texttt{true}       \Comment{ \texttt{LT} - lower triangular option to \texttt{linsolve}}
	    \State $T_1(\texttt{index})=\texttt{diag}(T_1)+dT_2(1)$        \Comment{ $T_1=T_1+dT_2(1)I_n$}			
		\State $X(:,1)=\texttt{linsolve}(T_1, C(:,1),\texttt{opts})$ \Comment{ column $1$ of $X$}
		
		\Statex
	
	    \For {$k=2:m$}   \Comment{ columns 2 through m of X}
		\State $T_1(\texttt{index})=dT_1+dT_2(k)$ \Comment{ $T_1=T_1+dT_2(k)I_n$}
		\State $X(:,k)=\texttt{linsolve}(T_1,C(:,k)-X(:,1:(k-1))*T_2(1:(k-1),k\big),\texttt{opts})$
		\EndFor
		
		\Statex	    
		\State \% Solution of $AX+XB=C$
		
		\State $X=Q_1*X*Q_2\texttt{'}$
		
	\end{algorithmic}
\end{algorithm}

\end{document}